\documentclass[11pt, reqno]{amsart}

\usepackage[section]{colepaperlibertine}

\addbibresource{BRW.bib}

\title[The Bramson correction for nonlocal Fisher--KPP equations]{The Bramson correction for Fisher--KPP equations with nonlocal diffusion}

\author{Cole Graham}
\address{Department of Mathematics, Stanford University, 450 Jane Stanford Way, Building 380, Stanford, CA 94305, USA}
\email{\tt grahamca@stanford.edu}

\begin{document}

\begin{abstract}
  We establish the logarithmic Bramson correction to the position of solutions to the Fisher--KPP equation with nonlocal diffusion.
  Solutions with step-like initial data typically resemble a front at position ${c_* t - \frac 3 {2\lambda_*} \log t + \m O(1)}$ for explicit constants $c_*$ and $\lambda_*$.
  However, certain singular diffusions exhibit more exotic behavior.
\end{abstract}

\maketitle

\section{Introduction}

We study the Fisher--KPP equation on the line with nonlocal diffusion:
\begin{equation}
  \label{eq:main}
  \partial_t u = \mu \, (J \ast u - u) + f(u) \quad \textrm{for } (t, x) \in \R_+ \times \R.
\end{equation}
Here $\mu > 0$ is a constant, $J$ is a compactly-supported probability distribution on $\R$, and $f$ is a KPP reaction as defined below.
We supplement \eqref{eq:main} with the initial data $u(0, \anon) = \tbf{1}_{\R_-}$.
Then the solution $u$ will satisfy $0 \leq u \leq 1$.

If we replace the nonlocal diffusion $J \ast u - u$ by the Laplacian, we obtain the classical Fisher--KPP equation~\cite{Fisher, KPP}
\begin{equation}
  \label{eq:FKPP}
  \partial_t \upsilon = \mu \Delta \upsilon + f(\upsilon).
\end{equation}
This equation models numerous invasion phenomena, and the propagation of the solution is a subject of intense study.
We expect similar propagation in \eqref{eq:main}, and are thus interested in the ``position'' of $u$ as $t \to \infty$.
Precisely, define
\begin{equation*}
  \sigma_\theta(t) \coloneqq \sup\{x\in \R \mid u(t,x)\geq \theta\}
\end{equation*}
for $\theta \in (0,1)$.
Then $\sigma_\theta$ tracks the leading edge of $u$ at level $\theta$.
In this work, we study the long-time behavior of $\sigma_\theta$.

The nonlocal diffusion $J \ast u - u$ is the generator of a continuous-time random walk.
This suggests that \eqref{eq:main} admits a probabilistic representation.
And indeed, when the nonlinearity $f$ has a special form, \eqref{eq:main} is intimately related to a continuous-time \emph{branching} random walk.
This is a nonlocal version of the well-known relationship between the classical Fisher--KPP equation \eqref{eq:FKPP} and branching Brownian motion.
Bramson famously exploited this connection to determine the position of the classical solution $\upsilon$ to constant order~\cite{Bramson78, Bramson83}.
If $\mr \sigma_\theta$ denotes the leading edge of $\upsilon$, Bramson showed
\begin{equation}
  \label{eq:local-prop}
  \mr \sigma_\theta(t) = \mr c_* t - \frac{3}{2\mr \lambda_*} \log t + C_\theta + \smallO(1) \quad \textrm{as } t \to \infty
\end{equation}
for explicit constants $\mr c_*, \mr \lambda_* > 0$ which depend only on $\mu$ and $f'(0)$.

Of course, after a long time and suitable rescaling, random walks tend to resemble Brownian motion.
We therefore expect the nonlocal equation \eqref{eq:main} to exhibit similar behavior.
This matter has attracted a great deal of attention in \emph{discrete-time}~\cite{Biggins, Hammersley, Kingman, ABR1}.
This effort culminated in the masterful work of A\"{\i}d\'{e}kon~\cite{Aidekon}, who proved an analogue of \eqref{eq:local-prop} for discrete-time branching random walks.

In this work, we show
\begin{equation}
  \label{eq:nonlocal-prop}
  \sigma_\theta(t) = c_* t - \frac{3}{2 \lambda_*} \log t + \m O_\theta(1) \quad \textrm{as } t \to \infty
\end{equation}
for typical equations \eqref{eq:main}, where $c_* \in \R$ and $\lambda_* > 0$ depend on $\mu,$ $J$, and $f'(0)$.
We emphasize that \eqref{eq:main} only corresponds to a branching random walk when $f$ has a special form.
Precisely, in Section~\ref{sec:probability} we show that a probabilistic representation is equivalent to the following condition.
\begin{definition}
  \label{def:prob}
  A reaction $f$ is \emph{probabilistic} if it is analytic on the interval $(0, 1)$, ${f(0) = f(1) = 0}$, $f'(1^-) < 0$, and $(-1)^{k + 1} f^{(k)}(1^-) \geq 0$ for all $k \in \N_{\geq 2}$.
\end{definition}
\noindent
We demonstrate \eqref{eq:nonlocal-prop} for a much wider class of reactions, and thus extend this universal behavior beyond the probabilistic realm.

We now describe our hypotheses in detail.
We assume that the reaction $f$ is of KPP-type~\cite{KPP}:
\begin{enumerate}[label = \textup{(F\arabic*)}, leftmargin = 5em, labelsep = 1em, itemsep= 1ex, topsep = 1ex]
\item
  \label{hyp:reg}
  $f\in \m C^1([0,1])$ \enspace and \enspace $f\in \m C^{1,\gamma}$ near $0$ for some $\gamma\in \left(0, 1\right)$;
  
\item
  $f(0) = f(1) = 0$ \enspace and \enspace $f|_{(0,1)} > 0;$
  
\item
  \label{hyp:KPP}
  $f(u) \leq f'(0) u$ \enspace for all $u\in [0,1]$.
\end{enumerate}
In addition, our approach relies heavily on the compact support of $J$.
By rescaling space, we may assume:
\begin{enumerate}[label = \textup{(J1)}, leftmargin = 5em, labelsep = 1em, itemsep = 1ex, topsep = 1ex]
\item
  \label{hyp:compact}
  $\op{supp}J \subset [-1,1]$.
\end{enumerate}
We note that $u$ propagates quite differently  $J$ has sufficiently fat tails; see, for instance,~\cite{MK, Gantert, CR}.

Now let $\m P(\R)$ denote the set of Borel probability measures on $\R$ and $\delta_z \in \m P(\R)$ the unit point-mass at position $z \in \R$.
Suppose the measure $J$ has an atom at the origin of mass $a \in (0, 1)$, so that $J = (1 - a) \ti J + a \delta_0$ for some $\ti J \in \m P(\R)$ such that $\ti{J}(\{0\}) = 0$.
Then
\begin{equation*}
  \mu (J \ast u - u) = \mu [(1 - a) \ti J \ast u + a u - u] = (1 - a) \mu (\ti J \ast u - u).
\end{equation*}
Thus by rescaling $\mu$, we are free to assume that
\begin{enumerate}[label = \textup{(J2)}, leftmargin = 5em, labelsep = 1em, itemsep = 1ex, topsep = 1ex]
\item
  \label{hyp:no-zero-atom}
  $J(\{0\}) = 0.$
\end{enumerate}
That is, $J$ has no atom at $0$.
This convention is implicit in the classification of equations we present below.

We now turn to the propagation of $u$.
In analogy with the classical case, we expect $u$ to resemble a shift of a \emph{traveling front solution} to \eqref{eq:main}.
These solutions have the form $U_c(x - ct)$ for some speed $c \in \R$ and profile $U_c$ satisfying
\begin{equation*}
  \begin{gathered}
    \mu(J \ast U_c - U_c) + c \, U_c' + f(U_c) = 0,\\
    0 \leq U_c \leq 1, \quad U_c(-\infty) = 1, \quad U_c(+\infty) = 0.
  \end{gathered}
\end{equation*}
Under mild conditions on $J$, Coville, D\'{a}vila, and Mart\'{\i}nez~\cite{CDM} proved the existence of a minimal speed $c_* \in \R$ such that a monotone front $U_c$ exists for each speed $c \geq c_*$.
The minimal speed is given by
\begin{equation}
  \label{eq:FG}
  c_* = \inf_{\lambda > 0} \Gamma(\lambda) \quad \textrm{for} \quad \Gamma(\lambda) \coloneqq \frac 1 \lambda \left[\mu \int_{\R} \e^{\lambda x} \, J(\dn x) - \mu + f'(0)\right].
\end{equation}
We note that~\cite{CDM} considers fronts connecting $0$ to $1$ rather than $1$ to $0$, so their spatial signs are opposite ours.
Also,~\cite{CDM} assumes $J \in \m C^0$ and occasionally ${J \in \m C^1}$.
The differentiability of $J$ will be unnecessary for our purposes.
Furthermore, continuity can be removed by a limiting argument, as in~\cite{Coville}.

The speed formula \eqref{eq:FG} has a simple motivation.
The KPP condition \ref{hyp:KPP} means the front $U_c$ is \emph{pulled}: its behavior is determined by the leading edge $x \gg 1$, where $U_c \ll 1$.
In this regime, \eqref{eq:main} is well-approximated by its linearization about $0$.
Thus $c_*$ is the minimal speed of an exponential solution to the linearization of \eqref{eq:main}.
A brief calculation yields \eqref{eq:FG}.

We now define several classes of equations.
\begin{definition}
  \label{def:classification}
  A triple $(\mu, J, f)$ is \emph{regular} if $J(\R_+) > 0$ or $f'(0) < \mu$ and \emph{irregular} otherwise.
  An irregular triple is \emph{trapping} if ${f'(0) > \mu}$ and \emph{critical} if $f'(0) = \mu$.
\end{definition}
\noindent
We also apply these terms to the equation \eqref{eq:main} corresponding to a triple $(\mu, J, f)$.
This classification is motivated by the speed formula \eqref{eq:FG}.
We claim that an equation is regular if and only if the infimum in \eqref{eq:FG} is attained.

To see this, suppose $J$ has mass on $\R_+$.
Then the integral term in $\Gamma$ will grow exponentially as $\lambda \to \infty$, while $\Gamma(\lambda) \sim f'(0) \lambda^{-1}$ as $\lambda \to 0^+$.
It follows that the infimum is attained at some intermediate $\lambda_* \in \R_+$.
On the other hand, if $J(\R_+) = 0$, our normalization \ref{hyp:no-zero-atom} implies that the integral term in $\Gamma$ vanishes as $\lambda \to \infty.$
Thus if $f'(0) < \mu$, we have $\Gamma(\lambda) < 0$ for $\lambda$ sufficiently large.
Then the limits $\Gamma(0^+) = +\infty$ and $\Gamma(+\infty) = 0$ imply that $\Gamma$ attains its (negative) minimum.
In each case, the minimizer $\lambda_* \in \R_+$ is unique.
Indeed, $\lambda$ is a critical point of $\Gamma$ precisely when
\begin{equation*}
  \mu \int_{\R} \e^{\lambda x}(\lambda x - 1) \, J(\dn x) = f'(0) - \mu.
\end{equation*}
The left side is strictly increasing in $\lambda$, so $\Gamma$ has at most one critical point.
We establish \eqref{eq:nonlocal-prop} for regular equations.

Now suppose $(\mu, J, f)$ is irregular, so $J(\R_+) = 0$ and $f'(0) \geq \mu$.
Then $\Gamma > 0$ and $\Gamma(+\infty) = 0$.
Thus $c_* = 0$ and $J$ does \emph{not} attain its infimum.
Irregular equations fall naturally into two categories: trapping ($f'(0) > \mu$) or critical ($f'(0) = \mu$).
We show that trapping equations formally obey \eqref{eq:nonlocal-prop} with $c_* = 0$ and $\lambda_* = +\infty$.
That is, $\sigma_\theta(t) = \m O_\theta(1)$ for all $\theta \in (0, 1)$.
Thus solutions to \emph{not} propagate, but rather become trapped a bounded distance from the origin.
Critical equations, however, may exhibit more unusual dynamics.

\begin{remark}
  Discrete-time branching random walks obey a similar trichotomy.
  Most propagate linearly in time with a logarithmic correction, but some become trapped near the origin, and a few exhibit stranger behavior~\cite{ABR1, Bramson-loglog}.
  We discuss the correspondence between continuous- and discrete-time branching random walks in Section~\ref{sec:probability}.
\end{remark}

We now present our main results.
First, regular equations mimic the classical behavior \eqref{eq:local-prop}.
\begin{theorem}
  \label{thm:reg}
  Let $(\mu, J,f)$ be regular and let $\lambda_* \in \R_+$  denote the unique minimizer in \eqref{eq:FG}.
  Then for all $\theta \in (0,1)$, there exists a constant $C(\mu, J, f, \theta) > 0$ such that
  \begin{equation*}
    \Big| \sigma_\theta(t) - c_* t + \frac 3 {2\lambda_*} \log t \, \Big| \leq C \quad \textrm{for all } t \geq 2.
  \end{equation*}
\end{theorem}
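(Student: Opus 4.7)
The plan is to prove matching upper and lower bounds on $\sigma_\theta(t)$ of the form $c_* t - \frac{3}{2\lambda_*}\log t + \m O_\theta(1)$, using super- and sub-solutions of \eqref{eq:main} built from the linearized equation
\begin{equation*}
  \partial_t w = \mu (J \ast w - w) + f'(0)\, w.
\end{equation*}
The exponential solutions $w(t,x) = e^{-\lambda(x - \Gamma(\lambda) t)}$ single out $c_* = \Gamma(\lambda_*)$, and the regularity hypothesis places $\lambda_*$ at an interior non-degenerate minimum of $\Gamma$, with $\Gamma''(\lambda_*) > 0$ (as is visible from differentiating the critical-point condition in the excerpt). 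Standard Bramson heuristics in the frame $\xi = x - c_* t$ then predict a leading-edge profile $\xi \mapsto \xi\, t^{-3/2}\, e^{-\lambda_* \xi}$, from which the shift $\frac{3}{2\lambda_*}\log t$ emerges as the location where this profile is of order one.

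For the upper bound, hypothesis \ref{hyp:KPP} makes every positive supersolution of the linearization a supersolution of \eqref{eq:main}. Writing $m(t) \coloneqq c_* t - \frac{3}{2\lambda_*}\log t$, I would use the ansatz
\begin{equation*}
  \bar u(t, x) = \min\bigl\{1,\; [A + B(x - m(t))]\, e^{-\lambda_*(x - m(t))}\bigr\}
\end{equation*}
with $A, B > 0$ to be fixed. A direct calculation Taylor-expanding $\Gamma$ to second order at $\lambda_*$ shows that $\bar u$ satisfies the linearized supersolution inequality for $t$ large: the logarithmic drift $\dot m(t) - c_* = -\frac{3}{2\lambda_* t}$ exactly compensates the $\Gamma''(\lambda_*)$-contribution of the $B(x - m(t))$ factor, leaving remainders of size $\m O(t^{-2})$ that are absorbed by the slack in $A$. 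Comparing $\bar u$ to $u$ at a reference time $t_0 \gg 1$ then yields $\sigma_\theta(t) \le c_* t - \frac{3}{2\lambda_*}\log t + C_\theta$ for all $t \ge 2$.

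For the lower bound, I would construct a compactly supported subsolution in the moving frame $\xi = x - m(t)$ on an interval $[-L, L]$, with profile roughly $\varepsilon\, \psi_L(\xi)\, (\xi + L)_+\, e^{-\lambda_* \xi}$. Here $\psi_L$ is the principal Dirichlet eigenfunction on $[-L, L]$ of the nonlocal operator $v \mapsto \mu(J \ast v - v) + c_* \partial_\xi v + f'(0)\, v$, whose principal eigenvalue is $\m O(L^{-2})$-close to zero for $L$ large; this spectral gap supplies the margin needed to absorb the nonlinear error $f'(0) u - f(u) = \m O(u^{1+\gamma})$ (valid by \ref{hyp:reg}) once $\varepsilon$ is chosen small. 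A Harnack-type spreading argument then ensures that $u(t_0, \cdot)$ dominates a suitable translate of this subsolution at some reference time $t_0$, after which the comparison principle propagates the bound forward.

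The principal obstacle is the lower bound. The nonlocal operator provides no smoothing, so the fundamental solution of the linearization is only \emph{asymptotically} Gaussian; the sharp bounds required must be extracted from $\Gamma$ by saddle-point analysis rather than from parabolic regularity. Building a subsolution whose exponential decay rate is calibrated exactly to $\lambda_*$ and whose cutoff boundary layers survive convolution by $J$ without violating the subsolution inequality is the delicate step, and it is here that hypothesis \ref{hyp:compact} is essential: the compact support of $J$ makes $\Gamma$ real-analytic in a neighborhood of $\lambda_*$, keeps the relevant boundary layers local, and delivers the Dirichlet eigenvalue asymptotics that underlie the Bramson constant $\frac{3}{2\lambda_*}$.
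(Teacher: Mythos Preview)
Your opening heuristic is correct, but both constructions have genuine gaps. For the upper bound: $v(\xi) = [A + B\xi]e^{-\lambda_*\xi}$ lies \emph{exactly} in the kernel of $\mu(J*\cdot - \cdot) + c_*\partial_\xi + f'(0)$; both $e^{-\lambda_*\xi}$ and $\xi e^{-\lambda_*\xi}$ are annihilated because $\lambda_*$ is a double root of the dispersion relation (equivalently, $\m L 1 = \m L x = 0$ after the tilt). There is no ``$\Gamma''(\lambda_*)$-contribution'' to compensate. In the $m(t)$-frame the residual term is $\tfrac{3}{2\lambda_* t}\partial_\xi v = \tfrac{3}{2\lambda_* t}[B - \lambda_*(A+B\xi)]e^{-\lambda_*\xi}$, which is negative for large $\xi$. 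The supersolution inequality then demands $-F(v) \geq \tfrac{3}{2\lambda_* t}|\partial_\xi v|$, but \ref{hyp:reg} only gives $|F(v)| \leq C v^{1+\gamma}$---an upper bound, not a lower one---so nothing forces this; it fails whenever $f$ is linear to high order at $0$. For the lower bound: the principal Dirichlet eigenvalue on a \emph{fixed} box $[-L,L]$ is of order $L^{-2}$, which produces a subsolution moving at speed $c_* - O(L^{-2})$ and hence only $\sigma_\theta(t) \geq (c_* - CL^{-2})t - C$. The logarithmic correction is invisible at this resolution; to see it you would need $L \sim \sqrt t$, and then the eigenfunction picture dissolves into a genuinely time-dependent problem.

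The paper's route is different in kind. After the tilt $\bar v = e^{\lambda_* x} u(t, x + \sigma(t))$, the principal linear part is a centered nonlocal diffusion $\m L \bar v = \nu[K*\bar v + m\,\partial_x \bar v - \bar v]$, and the paper studies its Dirichlet problem on $\R_+$ with compactly supported initial data and the residual drift $\tfrac{D}{t+1}\partial_x$. The key lemma---proved via a Feynman--Kac representation and ballot-type estimates for a continuous-time random walk with logarithmic drift---gives the two-sided bound $z(t,x) \asymp x\,(t+1)^{-3/2}$. It is this $t^{-3/2}$ decay of the half-line problem, not an algebraic cancellation, that encodes the coefficient $\tfrac{3}{2\lambda_*}$: multiplying $z$ by $(t+t_0)^{3/2}$ yields a supersolution for $\bar v$ that stays order one. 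The lower bound uses $z$ again (with a larger drift, inducing extra polynomial decay that absorbs the nonlinear error $|F| \leq C u^{1+\gamma}$) to control $u$ at the diffusive position $c_* t + \sqrt t$, and then slides a genuine KPP front $\ti U\big(x - c_* t + \tfrac{3}{2\lambda_*}\log t\big)$ underneath.
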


Next, trapping equations resemble regular equations with $c_* = 0$ and ${\lambda_* = +\infty}$.
\begin{theorem}
  \label{thm:trapping}
  Let $(\mu, J, f)$ be trapping.
  Then for all $\theta \in (0, 1)$, there exists a constant $C(\mu, J, f, \theta) > 0$ such that
  \begin{equation*}
    \abs{\sigma_\theta(t)} \leq C \quad \textrm{for all } t \geq 0.
  \end{equation*}
\end{theorem}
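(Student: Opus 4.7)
I split $|\sigma_\theta(t)| \leq C$ into an upper bound $\sigma_\theta(t) \leq 0$ and a lower bound $\sigma_\theta(t) \geq -C_\theta$. Both bounds exploit the following consequence of the trapping hypothesis together with \ref{hyp:compact} and \ref{hyp:no-zero-atom}: $\op{supp} J \subset [-1, 0)$. In particular, $(J \ast u)(x) = \int u(x - y) J(\dn y)$ samples $u$ strictly to the right of $x$.

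\emph{Upper bound.} For $x > 0$ and $y \in \op{supp} J$ one has $x - y > 0$. Thus the restriction of \eqref{eq:main} to $\R_+$ is self-contained: the right-hand side at $x > 0$ depends only on $u$ at points greater than $x$. Since the initial data vanishes on $\R_+$, uniqueness forces $u \equiv 0$ on $\R_+ \times \R_+$, whence $\sigma_\theta(t) \leq 0$.

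\emph{Lower bound.} The plan is to construct, for each $\theta \in (0, 1)$, a compactly supported stationary subsolution $\underline u \leq u(0, \anon)$ with $\max \underline u \geq \theta$; comparison then yields $u(t, \cdot) \geq \underline u$ and $\sigma_\theta(t) \geq \sup\{x : \underline u(x) \geq \theta\}$, a finite number. Set $\alpha_0 \coloneqq 0$ and let $\alpha_k$ be the largest value in $(\alpha_{k-1}, 1)$ with $f(\beta) \geq \mu(\beta - \alpha_{k-1})$ for every $\beta \in [\alpha_{k-1}, \alpha_k]$. The trapping condition $f'(0) > \mu$ guarantees $\alpha_1 > 0$, and passing to the limit in the defining identity shows $\alpha_k \nearrow 1$; hence $\alpha_K \geq \theta$ for some finite $K = K(\theta)$. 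Define the staircase
\[
  \underline u \coloneqq \sum_{k = 1}^K \alpha_k \tbf{1}_{[-k, -k+1)}.
\]
For $x \in [-k, -k+1)$, the restriction $\op{supp} J \subset [-1, 0)$ makes $(J \ast \underline u)(x)$ a convex combination $p \alpha_k + q \alpha_{k-1}$ with $p + q = 1$ (using $\alpha_0 = 0$ when $k = 1$). The stationary subsolution inequality reduces to $f(\alpha_k) \geq \mu q (\alpha_k - \alpha_{k-1})$, which holds for every $q \in [0, 1]$ by the defining property of $\alpha_k$; outside $\bigcup_k [-k, -k+1)$ it is trivial.

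\emph{Main obstacle.} The substantive step is the construction of the staircase. Although the infimum in \eqref{eq:FG} is not attained and no classical front at speed $c_* = 0$ exists, compactly supported stationary subsolutions of any sub-unit height do, thanks to the trapping headroom $f'(0) > \mu$ driving the iteration $\alpha_k \nearrow 1$. A minor technicality is the application of the comparison principle to the piecewise constant $\underline u$; this can be dispatched either by mollifying $\underline u$ slightly downward or by invoking a comparison principle for bounded measurable subsolutions of the nonlocal equation.
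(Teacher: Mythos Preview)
Your proof is correct, and the upper bound is essentially the same as the paper's (the paper phrases it as ``$\tbf{1}_{\R_-}$ is a supersolution,'' which is equivalent to your observation that the equation restricted to $\R_+$ is closed). For the lower bound, however, you take a genuinely different route. The paper invokes Theorem~1.3 of~\cite{CDM} to obtain a monotone stationary front $U$, then uses the identity $\mu U - f(U) = \mu\, J \ast U \geq 0$ together with $f'(0) > \mu$ to conclude that $U$ cannot take values in a neighborhood of $0$, and hence must vanish identically on a half-line; sandwiching $u$ between $U$ and $\tbf{1}_{\R_-}$ finishes the proof. Your staircase subsolution, by contrast, is completely explicit and self-contained: the iteration $\alpha_k \nearrow 1$ is driven directly by the trapping margin $f'(0) > \mu$ and the positivity of $f$ on $(0,1)$, and no external front-existence result is needed. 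The paper's argument is shorter given~\cite{CDM} and yields a genuine stationary \emph{solution} below $u$ (potentially useful for convergence questions), whereas your construction is more elementary and makes the role of the inequality $f(s) \geq \mu s$ near $0$ very transparent. One small correction to your commentary: a stationary front at speed $c_* = 0$ \emph{does} exist here---it is simply discontinuous, as the paper shows---so your remark that ``no classical front at speed $c_* = 0$ exists'' is a slight misreading of the situation, though it has no bearing on the validity of your argument.
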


\begin{remark}
  For probabilistic $f$, the conclusions of Theorems~\ref{thm:reg} and \ref{thm:trapping} follow from results of Addario-Berry and Reed~\cite{ABR1}.
  The principal contribution of the present work is the extension of these results to \emph{any} reaction satisfying \ref{hyp:reg}--\ref{hyp:KPP}.
  Our results are also similar to those of Gao~\cite{Gao}, who considered \eqref{eq:main} with an additional term of the form $\eps \Delta u$.
  Our methods, however, are quite different.
\end{remark}

We conclude with critical equations.
These are akin to the special discrete-time branching random walks considered in~\cite{Bramson-loglog}, and more exotic shifts are possible.
Rather than handling all critical equations, we detail a special case.
\begin{proposition}
  \label{prop:crit}
  Fix $\mu > 0$ and $p > 1$.
  Let $f(u) = \mu(u - u^p)$ and $J = \delta_{-1}$.
  Then for all $\theta \in (0, 1)$, there exists $C(\mu, p, \theta) > 0$ such that
  \begin{equation}
    \label{eq:crit}
    \Big| \sigma_\theta(t) + \frac{\log \log t}{\log p}\Big| \leq C \quad \textrm{for all } t \geq 2.
  \end{equation}
\end{proposition}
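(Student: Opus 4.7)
Under $J = \delta_{-1}$ the convolution becomes $J \ast u(t, x) = u(t, x+1)$, and \eqref{eq:main} simplifies to $\partial_t u = \mu u(t, x+1) - \mu u^p$. The key observation is that the solution inherits a layer-wise structure from the initial data. First, $\tbf 1_{\R_-}$ is a stationary supersolution, so comparison gives $u(t, x) = 0$ for all $x > 0$. Consequently, an induction on $n \geq 0$ shows that $u(t, \anon)$ is constant on each interval $(-n-1, -n]$: the ODE it satisfies there involves only $u$ on $(-n, -n+1]$, which is inductively constant. Writing $B_n(t) \coloneqq u(t, -n)$ reduces \eqref{eq:main} to the triangular ODE system
\begin{equation*}
  B_0' = -\mu B_0^p, \qquad B_n' = \mu B_{n-1} - \mu B_n^p \quad (n \geq 1), \qquad B_n(0) = 1,
\end{equation*}
whose base solves explicitly as $B_0(t) = A(t) \coloneqq (1 + (p-1)\mu t)^{-1/(p-1)}$.

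The heart of the proof is the two-sided bound $A(t)^{1/p^n} \leq B_n(t) \leq C A(t)^{1/p^n}$ for all $n \geq 0$ and $t \geq 0$, with some constant $C = C(p) \geq 1$. The scale $A^{1/p^n}$ is forced by algebra: $(A^{1/p^n})^p = A^{1/p^{n-1}}$, so the source and reaction cancel exactly, $\mu A^{1/p^{n-1}} - \mu (A^{1/p^n})^p = 0$. Since $A$ is decreasing, $\underline B_n \coloneqq A^{1/p^n}$ thus satisfies $\underline B_n' \leq 0 = \mu \underline B_{n-1} - \mu \underline B_n^p$, a subsolution inequality; the lower bound follows by induction via scalar ODE comparison. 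For the upper bound, I would take $\bar B_n \coloneqq C A^{1/p^n}$; the supersolution condition reduces to $C^{p-1} - 1 \geq p^{-n} A^{(p-1)(1 - 1/p^n)}$, which (using $A \leq 1$) holds uniformly in $n \geq 1$ and $t \geq 0$ provided $C^{p-1} \geq 1 + 1/p$. The choice $C \coloneqq (1 + 1/p)^{1/(p-1)}$ completes the bound.

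A further routine comparison yields monotonicity $B_n \leq B_{n+1}$, and together with $B_n \geq A^{1/p^n} \to 1$ this gives $\sigma_\theta(t) = -n_\theta(t)$, where $n_\theta(t) \coloneqq \min\{n \geq 0 : B_n(t) \geq \theta\}$. Inverting the two-sided bound gives $n_\theta(t) = (\log p)^{-1} \log |\log A(t)| + \m O_\theta(1)$, and since $|\log A(t)| = (p-1)^{-1} \log t + \m O(1)$ for $t \geq 2$, this yields~\eqref{eq:crit}. The principal obstacle is recognizing the self-similar scale $A^{1/p^n}$ and verifying the supersolution algebra; once this is in hand, everything reduces to elementary ODE comparisons.
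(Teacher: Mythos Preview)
Your proof is correct and lands on the same barriers as the paper, but reaches them by a slightly more elementary route. The paper works at the PDE level: it identifies the stationary front $U(x)=\exp(-p^x)$, then uses shifted copies $U(x-\sigma_\pm(t))$ (scaled by $\Omega=((p+1)/p)^{1/(p-1)}$ for the supersolution) as barriers on $\R_-$, invoking the PDE comparison principle with the explicitly-solved buffer zone $[-1,0)$. You instead exploit the piecewise-constant structure to reduce to the triangular system $B_n'=\mu B_{n-1}-\mu B_n^p$ and run scalar ODE comparison, which avoids the nonlocal comparison principle entirely. The content is the same: your $A^{1/p^n}$ is exactly $U$ evaluated at the lattice after the paper's shift $\sigma_-$, and your constant $C=(1+1/p)^{1/(p-1)}$ is precisely the paper's $\Omega$. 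One cosmetic point: with the paper's convention $\R_-=(-\infty,0)$ the layers are $[-n-1,-n)$ rather than $(-n-1,-n]$, so $B_n=u(t,-n)$ would give $B_0=0$; your indexing is consistent with $\R_-=(-\infty,0]$, and the discrepancy is a single point that does not affect~\eqref{eq:crit}.
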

When $p = 2$, $f$ is the classical Fisher--KPP reaction, which corresponds to a binary branching random walk.
In this case, the conclusion of Proposition~\ref{prop:crit} follows from the main result of Bramson in~\cite{Bramson-loglog}.
However, the reaction $u - u^p$ is not probabilistic for any $p \neq 2$.
A walk with $p$ offspring at each branching event, for instance, has reaction $1 - u - (1 - u)^p$.
And indeed, by~\cite{Bramson-loglog}, the asymptotic position \eqref{eq:crit} is not attained by any branching random walk with jump-kernel $\delta_{-1}$.
Non-probabilistic equations thus exhibit a richer set of behavior.

Using the comparison principle, we can extend our results to solutions evolving from ``step-like'' initial data $u(0, \anon) = u_0$.
Indeed, if there exists $L \geq 0$ such that $0 \leq u_0 \leq 1$, $u_0|_{(-\infty, -L)} \equiv 1$, and $u_0|_{(L, \infty)} \equiv 0$, then we can sandwich $u$ between translations of the special step solution considered above.
Thus Theorem~\ref{thm:reg}, Theorem~\ref{thm:trapping}, and Proposition~\ref{prop:crit} all apply to $u$, with constants $C$ depending also on the initial data $u_0$.

Finally, we note that our model \eqref{eq:main} is distinct from the well-studied ``nonlocal Fisher--KPP equation,'' which involves a nonlocal \emph{nonlinearity} rather than nonlocal diffusion.
The nonlocal Fisher--KPP equation has garnered much attention in the last decade.
See \cite{BNPR} for traveling waves, \cite{BHR} for the Bramson correction to propagation, and \cite{ABBP} for a recent probabilistic interpretation.
There are two principal differences between \eqref{eq:main} and the nonlocal Fisher--KPP equation: \eqref{eq:main} obeys the comparison principle, but does not enjoy parabolic regularity.
The technical challenges in this work are thus quite different from those overcome in \cite{BHR}.

In Section~\ref{sec:probability}, we discuss the connection between \eqref{eq:main} and branching random walks.
We prove our main result, Theorem~\ref{thm:reg}, in Section~\ref{sec:regular}.
We handle irregular equations in Section~\ref{sec:irreg}.
In the appendix, we record a proof of a crucial but standard probabilistic estimate in our setting.

\section*{Acknowledgements}

This work was supported by the Fannie and John~Hertz Foundation and NSF grant DGE-1656518.
We warmly thank Lenya~Ryzhik for many productive discussions.

\section{The probabilistic connection}
\label{sec:probability}

In this section, we examine the relationship between the nonlocal Fisher--KPP equation \eqref{eq:main} and branching random walks (BRWs).

\subsection{Continuous time}

A continuous-time BRW is a growing collection of particles on $\R$, each jumping and reproducing independently with exponential rates $\mu$ and $r$, respectively.
When particles jump, we assume that they obey a law $J \in \m P(\R)$ satisfying \ref{hyp:compact} and \ref{hyp:no-zero-atom}.
When they reproduce, the particles have a random number of offspring distributed according to a law $\kappa \in \m P(\N_{\geq 2})$.
For a detailed description and construction of branching random walks, we refer the reader to Harris~\cite{Harris}.
Throughout, $\m X$ and $\m Z$ will denote random variables with laws $J$ and $\kappa$, respectively.
We will assume
\begin{equation}
  \label{eq:moment}
  \E \m Z^{1 + \gamma} < \infty
\end{equation}
for some $\gamma > 0$.
This condition is nearly sharp, as BRWs behave quite differently when $\E \m Z = \infty$; see, for instance,~\cite{Grey, RHRS}.

\begin{remark}
  More generally, we could allow a particle to have zero or one offspring when it branches.
  However, if a particle has one offspring, effectively nothing has changed.
  We can thus decrease $r$ and assume that $\P[\m Z = 1] = 0$.
  This is identical to our adjustment of $\mu$ to achieve \ref{hyp:no-zero-atom}.
  
  If $\m Z = 0$, a particle dies.
  This leads to the unpleasant possibility of extinction, in which all particles perish.
  However, conditional on non-extinction, such processes behave much like those without death.
  To avoid this technicality, we assume our particles are immortal.
  This explains the restriction $\kappa \in \m P(\N_{\geq 2})$.
\end{remark}

To understand the spreading of the population in a BRW, we study the particle with \emph{maximal} position.
That is, if $X_t^1, \ldots, X_t^{Z_t}$ denote the particle positions at time $t$, we study the cumulative distribution of the maximal particle:
\begin{equation*}
  v(t,x) \coloneqq \P\left[\max_{1\leq j \leq Z_t} X_t^j \leq x\right] \For (t,x)\in [0,\infty)\times\R.
\end{equation*}
Note that the population size $Z_t$ is itself random.

Let $g$ denote the probability generating function of $\kappa$:
\begin{equation*}
  g(s) \coloneqq \E s^{\m Z}.
\end{equation*}
A renewal argument along the lines of McKean~\cite{McKean} shows that $v$ solves a reaction-diffusion equation with nonlocal diffusion:
\begin{equation}
  \label{eq:main-reversed}
  \partial_t v = \mu[J \ast v - v] + r[g(v) - v].
\end{equation}

We find it more convenient to study the reversed cumulative distribution:
\begin{equation*}
  u(t, x) \coloneqq \P\left[\max_{1\leq j \leq Z_t} X_t^j > x\right] \For (t,x) \in [0,\infty) \times \R.
\end{equation*}
Then $u = 1 - v$, and \eqref{eq:main-reversed} suggests the definition
\begin{equation}
  \label{eq:reaction}
  f(u) \coloneqq r[1 - u - g(1 - u)].
\end{equation}
With this reaction, $u$ satisfies \eqref{eq:main}.
We will assume that the population begins from a single individual at the origin, so that $u(0, \anon) = \tbf{1}_{\R_-}$.

By \eqref{eq:moment}, $f$ satisfies \ref{hyp:reg}--\ref{hyp:KPP}.
In fact, $f$ satisfies the conditions in Definition~\ref{def:prob}.
Conversely, suppose a reaction $f$ is probabilistic in the sense of Definition~\ref{def:prob}.
If we define $r \coloneqq -f'(1^-) > 0$ and
\begin{equation*}
  g(s) \coloneqq s - \frac{f(1 - s)}{r},
\end{equation*}
then we can check that $g$ is analytic on $(0, 1)$, $g(0) = g'(0^+) = 0$, $g(1) = 1$, and $g^{(k)}(0^+) \geq 0$ for all $k \in \N_{\geq 2}$.
Hence $g$ is the probability generating function of some distribution $\kappa \in \m P(\N_{\geq 2})$.
We have thus shown that \eqref{eq:main} corresponds to a continuous-time BRW if and only if $f$ satisfies the conditions in Definition~\ref{def:prob}.

\subsection{Discrete time}

While continuous-time BRWs are of analytic interest, the majority of the BRW literature concerns \emph{discrete} time.
In this setting, each particle is replaced by an independent copy of a fixed point process $\Pi$ when we step forward in time.
For instance, $\Pi$ might be $\m Z$ particles independently sampled from $J$.

Crucially, we can obtain a discrete-time BRW from one in continuous time by sampling at evenly-spaced times.
In this case, the point process $\Pi$ is simply the set of particles in the continuous-time BRW after the first time interval.
The position of the maximal particle in discrete-time BRWs is well-understood; see, for instance,~\cite{Biggins, Hammersley, Kingman, BZ1, BZ2, ABR1, Aidekon}.
Thus when $f$ is probabilistic, our main results follow from prior work on discrete-time BRWs.

The literature on nonlocal reaction-diffusion equations is somewhat disconnected from this impressive body of probabilistic work.
It therefore seems desirable to explicitly establish the correspondence between results in discrete and continuous time.

We fix a continuous-time BRW with kernels $J$ and $\kappa$, and sample it at the discrete time-set $\Z_{\geq 0}$.
Let $\Pi$ denote the point process at time $t = 1$.
Each particle in $\Pi$ is individually distributed according to the law $J_1$ of a continuous-time random walk at time $1$.
Thus, $J_1$ is a Poissonization of $J$:
\begin{equation}
  \label{eq:Poisson}
  J_1 = \e^{-\mu} \sum_{k = 0}^\infty \frac{\mu^k}{k!}J^{\ast k}.
\end{equation}
The total number of particles in $\Pi$ is the population size $Z_1$ of the continuous-time BRW at time $1$.
Its law is not as easily described as $J_1$, but we can use a renewal argument to compute its moments.
In particular,
\begin{equation}
  \label{eq:growth}
  \E Z_t = \exp\left[r(\E \m Z - 1) t\right] \ForAll t \geq 0.
\end{equation}
We note that \eqref{eq:reaction} implies
\begin{equation}
  \label{eq:derivative}
  f'(0) = r(\E \m Z - 1).
\end{equation}
Thus $f'(0)$ represents the mean rate of particle production in the continuous-time BRW.

\begin{remark}
  The particles in $\Pi$ are correlated through their shared ancestries, so $\Pi$ is \emph{not} simply $Z_1$ particles independently sampled from $J_1$.
  Thus, this walk technically lies outside the scope of Addario-Berry and Reed~\cite{ABR1}.
  This issue is not serious, however, and we ignore it hereafter.
  The more general case is handled by A\"{\i}d\'ekon in~\cite{Aidekon}.
\end{remark}

With this setup, we compute the asymptotic speed of the maximal particle in the discrete-time BRW we have constructed.
This speed is related to the logarithmic moment generating function of $\Pi$:
\begin{equation*}
  R(\lambda) \coloneqq \log \E\left[\sum_{p \in \Pi} \e^{\lambda X(p)}\right],
\end{equation*}
where $p$ denotes a point in $\Pi$ with position $X(p)$.
Recalling that all the particles in $\Pi$ have law $J_1$, Wald's identity yields
\begin{equation*}
  \E\left[\sum_{p \in \Pi} \e^{\lambda X(p)}\right] = \E[Z_1] \, \E\big[\e^{\lambda X_1}\big],
\end{equation*}
where $\op{Law}(X_1) = J_1$.
Thus by \eqref{eq:growth},
\begin{equation*}
  R(\lambda) = \log \E Z_1 + \log \E \e^{\lambda X_1} = r(\E \m Z - 1) + \log \E \e^{\lambda X_1}.
\end{equation*}
Next, we use \eqref{eq:Poisson}:
\begin{equation*}
  \E \e^{\lambda X_1} = \e^{-\mu} \sum_{k = 0}^\infty \frac{1}{k!} \left(\mu \E \e^{\lambda \m X}\right)^k = \exp\left[\mu \left(\E \e^{\lambda \m X} - 1\right)\right].
\end{equation*}
Therefore
\begin{equation}
  \label{eq:LMGF-reduced}
  R(\lambda) = r(\E \m Z - 1) + \mu \left(\E \e^{\lambda \m X} - 1\right).
\end{equation}
As shown in~\cite{Shi}, for instance, the speed of the maximal particle in the discrete-time BRW is given by
\begin{equation}
  \label{eq:discrete-speed}
  c_* = \inf_{\lambda > 0} \frac{R(\lambda)}{\lambda} = \inf_{\lambda > 0} \frac{1}{\lambda} \left[r(\E \m Z - 1) + \mu \left(\E \e^{\lambda \m X} - 1\right)\right].
\end{equation}
Finally, \eqref{eq:derivative} shows that \eqref{eq:discrete-speed} agrees with \eqref{eq:FG}, which was derived by purely analytic means.

\subsection{Classification}

We close with a discussion of our classification of equations.
In discrete-time, there are also three fundamental classes of BRWs, at least when $J$ and $\kappa$ are sufficiently bounded.
In the regular case, \eqref{eq:discrete-speed} admits a minimizing $\lambda_*$, and Theorem~3 of~\cite{ABR1} states that the maximal particle has position
\begin{equation*}
  \sigma(n) = c_* n - \frac{3}{2\lambda_*} \log n + \m O(1).
\end{equation*}

In discrete-time, a BRW is regular if $X_1$ is unbounded from above.
After all, this ensures that $R$ grows superlinearly as $\lambda \to +\infty$, so that the infimum in \eqref{eq:discrete-speed} is attained.
We therefore consider the alternative: suppose $\sup X_1 < \infty$.
Since our $X_1$ is a Poissonization of $\m X$, this is equivalent to $\m X \leq 0$.
In fact, by \ref{hyp:no-zero-atom} it is equivalent to $\m X < 0$.
From here, the discrete-time classification hinges on the value of
\begin{equation*}
  \Xi \coloneqq \P[X_1 = 0] \, \E Z_1 = \exp\left[-\mu + r (\E \m Z - 1)\right] = \exp\big[f'(0) - \mu\big].
\end{equation*}
When $\Xi < 1$, Corollary~2 in~\cite{ABR1} implies that \eqref{eq:discrete-speed} has a minimizer.
In our case, this is equivalent to $f'(0) < \mu$.
We already showed that this implies the existence of a minimizer in \eqref{eq:FG}.

When $\Xi > 1$, Theorem~4 in~\cite{ABR1} states that the maximal particle remains a bounded distance from the origin.
That is, it becomes trapped.
In our case, of course, this is equivalent to the trapping condition $f'(0) > \mu$.

Finally, the borderline case $\Xi = 1$ is critical, and can yield unusual results.
Bramson neatly examined this situation in discrete time~\cite{Bramson-loglog}.
Due to the variety of possible behaviors, we do not comprehensively study the analogous $f'(0) = \mu$ case of \eqref{eq:main}.
Our Proposition~\ref{prop:crit} exhibits one family of critical shifts.

In summary, \eqref{eq:LMGF-reduced} allows us to translate between continuous and discrete times.
Our three main results then parallel a well-known trichotomy in discrete time, and follow from previous results when $f$ is probabilistic.
However, \eqref{eq:main} does not correspond to a branching process when $f$ lies outside this narrow class of reactions.
We therefore develop an alternative approach to the propagation of $u$.

\section{Regular equations}
\label{sec:regular}

The main virtue of the present work is the reduction of the regular problem to certain bounds on an ordinary \emph{non-branching} walk (namely, Lemma~\ref{lem:key} below).
This simplified approach is more flexible, and extends to non-probabilistic reactions.
We note that this reduction was previously observed by A\"{\i}d\'{e}kon and Shi~\cite{AS}.

\subsection{Proof outline}

To prove Theorem~\ref{thm:reg}, we follow the approach of Hamel, Nolen, Roquejoffre, and Ryzhik in~\cite{HNRR}.
There, the authors establish the Bramson shift for the classical Fisher--KPP equation using purely PDE tools.
They relate the solution in a moving frame to a linear Dirichlet problem on $\R_+$, and derive the shift from the long-time behavior of this linear problem.
We use the same method.

To begin, let $\sigma$ denote the expected position of the leading-edge of $u$:
\begin{equation}
  \label{eq:shift}
  \sigma(t) \coloneqq c_*t - \frac{3}{2\lambda_*} \log \frac{t + t_0}{t_0}
\end{equation}
for some regularizing time-shift $t_0 \geq 1$.
We analyze \eqref{eq:main} in a frame moving with $\sigma$.
We expect, although do not show, that $u$ eventually resembles the traveling front $U_{c_*}$ in this moving frame.
As shown in~\cite{CDM},
\begin{equation*}
  U_{c_*}(s) \asymp s \e^{-\lambda_* s} \quad \textrm{when } s \geq 1.
\end{equation*}
We thus broadly expect $u$ to decay like $\e^{-\lambda_* x}$ in the moving frame.
It is convenient to preemptively remove this decay.
Therefore, let
\begin{equation*}
  \bar v(t,x) \coloneqq \e^{\lambda_* x} u(t, x + \sigma(t)).
\end{equation*}
By standard manipulations, $\bar v$ satisfies
\begin{equation}
  \label{eq:transformed-upper}
  \partial_t \bar v = \nu K \ast \bar v + \dot \sigma (\partial_x \bar v - \lambda_* \bar v) + [f'(0) - \mu] \bar v + \e^{\lambda_* x} F\big(\e^{-\lambda_* x} \bar v\big),
\end{equation}
where
\begin{equation*}
  \nu \coloneqq \mu \int_{\R} \e^{\lambda_* x} \, J(\dn x), \quad K \coloneqq \frac{\mu}{\nu} \e^{\lambda_* x} J,
\end{equation*}
and $F(u) \coloneqq f(u) - f'(0) u \leq 0$ denotes the ``purely nonlinear'' part of $f$.
By construction, $K \in \m P(\R)$ is an exponential tilt of $J$.

By the definition of the shift $\sigma$,
\begin{equation*}
  \dot{\sigma}(t) = c_* - \frac{3}{2 \lambda_* (t + t_0)}.
\end{equation*}
If we discard the nonlinearity $F$ and terms of order $t^{-1}$ from \eqref{eq:transformed-upper}, we obtain its principal linear part
\begin{equation*}
  \m L \bar{v} \coloneqq \nu K \ast \bar{v} + c_* \partial_x \bar{v} + [f'(0) - \mu - c_* \lambda_*] \bar{v}.
\end{equation*}
By the definition of $c_*$ and $\lambda_*$,
\begin{equation*}
 \nu = c_* \lambda_* + \mu - f'(0) \And \nu \int_{\R} x \, K(\dn x) = c_*.
\end{equation*}
Let
\begin{equation*}
  m \coloneqq \frac{c_*}{\nu}
\end{equation*}
denote the mean of the probability distribution $K$.
Then we can write
\begin{equation*}
  \m L \bar{v} = \nu\left[K \ast \bar{v} + m \, \partial_x \bar{v} - \bar{v}\right].
\end{equation*}
It follows that $\m L 1 = \m L x = 0$ and $\m L x^2 = \nu \op{Var} K > 0$.
Thus $\m L$ resembles a multiple of the Laplacian to second order, and the principal part of \eqref{eq:transformed-upper} is a nonlocal analogue of the heat equation.

The remaining linear part in \eqref{eq:transformed-upper} is due to the logarithmic term in $\sigma$:
\begin{equation*}
  \frac{3}{2(t + t_0)} \bar v - \frac{3}{2\lambda_*(t + t_0)} \partial_x \bar v.
\end{equation*}
The first term corresponds to multiplication by the factor $(t + t_0)^{\frac 3 2}$.
That is, it could be trivially removed by replacing $\bar{v}$ by $(t + t_0)^{-\frac{3}{2}} \bar{v}$.
The second term \emph{should} be negligible, but is technically more difficult to handle.
We therefore study the Dirichlet problem
\begin{equation}
  \label{eq:Dirichlet}
  \begin{cases}
    \partial_t z = \m L z + \frac{D}{t + 1} \partial_x z & \textrm{on } \R_+,\\
    z = 0 & \textrm{on } (-\infty, 0],\\
    z(0, x) = \tbf{1}_{(L, 2L)}(x),
  \end{cases}
\end{equation}
for some fixed $D \in \R$ and $L \gg 1$ to be determined.
Note that we have replaced the time-shift $t_0$ by $1$ in this problem.
We will use the degree of freedom afforded by $t_0$ to relate a time-shift of \eqref{eq:Dirichlet} to \eqref{eq:transformed-upper}.

The Dirichlet model \eqref{eq:Dirichlet} is closely related to a random walk with killing.
Let $\bar{K}$ denote the spatial reverse of the measure $K$, so that $\bar{K}(A) = K(-A)$ for every Borel $A \subset \R$.
Then let $(X_s)_{s \geq 0}$ perform a continuous-time random walk with jump rate $\nu$, jump law $\bar{K}$, and constant drift $c_*$ starting from $0$.
By the construction of $K$, $\E X_s = 0$ for all $s \geq 0$.
The process $(X_s)_{s \geq 0}$ is simply a centered walk with jump law $K$ viewed \emph{backwards} in time.
We use it in a Feynman--Kac representation of \eqref{eq:Dirichlet}.

To do so, we must account for the extra drift $\frac{D}{t + 1} \partial_x$.
The time-dependence of this drift somewhat complicates matters.
Let us fix $(t, x) \in [0, \infty) \times \R_+$, and define the log-drifting walk
\begin{equation*}
  Y_s^x \coloneqq X_s + x + D \log \frac{t + 1}{t - s + 1} \quad \textrm{for } 0 \leq s \leq t.
\end{equation*}
Then \eqref{eq:Dirichlet} admits a Feynman--Kac representation via $Y$:
\begin{equation}
  \label{eq:FK}
  z(t, x) = \P\Big[Y_t^x \in (L, 2L),\; Y_s^x > 0 \; \textrm{ for all } \; 0 \leq s \leq t\Big].
\end{equation}
To construct super- and subsolutions for \eqref{eq:transformed-upper}, we use the behavior of $Y$ to control $z$.
The following lemma is the key to our comparison arguments.
\begin{lemma}
  \label{lem:key}
  There exists an initial length $L > 0$ and a constant $C_*(\mu, J, f'(0), D) \geq 1$ such that for all $(t, x) \in \R_+ \times \R_+$,
  \begin{equation}
    \label{eq:key}
    \frac{(x - C_*)}{C_*(t + 1)^{\frac 3 2}} \tbf{1}_{x \leq \sqrt{t}} \leq z(t, x) \leq \frac{C_*(x + 1)}{(t + 1)^{\frac 3 2}}.
  \end{equation}
\end{lemma}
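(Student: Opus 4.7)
The Feynman--Kac formula \eqref{eq:FK} reduces the lemma to a sharp ballot-type estimate for the centered random walk $X$ --- which has bounded jumps by \ref{hyp:compact} and strictly positive variance $\sigma^2 = \nu \int y^2 \, K(\dn y)$ --- subject to the deterministic shift $h(s) \coloneqq D\log\frac{t+1}{t-s+1}$. Rewriting \eqref{eq:FK} in terms of $X$,
\begin{equation*}
  z(t, x) = \P\bigl[X_s + x > -h(s) \textrm{ for all } s \leq t, \ X_t + x \in (L - h(t), 2L - h(t))\bigr],
\end{equation*}
where the moving barrier $-h(s)$ is monotone in $s$ with $|h(s)| \leq |D|\log(t+1)$ and the target window is translated by $|h(t)| = |D|\log(t+1)$.

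The first step is to establish the flat-barrier ballot theorem
\begin{equation*}
  \P\bigl[X_s + x > 0 \textrm{ for all } s \leq t, \ X_t + x \in (L, 2L)\bigr] \asymp \frac{x \wedge \sqrt{t}}{(t+1)^{3/2}}
\end{equation*}
for $x$ bounded away from $0$, together with the matching upper bound $\lesssim (x+1)/(t+1)^{3/2}$ valid for all $x \geq 0$. This is classical. The key ingredients are (i) Iglehart--Kozlov survival estimates expressed via the renewal function $V$ of the descending ladder process --- $V$ is harmonic for the walk killed on entering $(-\infty, 0]$ and satisfies $V(x) \asymp x$ at infinity; (ii) a local central limit theorem at time $t$ for the conditioned walk; and (iii) a time-reversal / path-decomposition argument producing the final $L \cdot t^{-3/2}$ decay. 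The compact support of $K$ (from \ref{hyp:compact}) and its positive variance supply the moment hypotheses for each of these ingredients.

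The second step transfers the flat-barrier estimate to the log-drift setting. For the \emph{upper bound}, monotonicity of $h$ lets us dominate the moving barrier by a constant; combined with an $O(\log t)$ translation of the target window and the flat-barrier survival estimate together with the local CLT, this yields $z(t, x) \lesssim (x + 1)/(t+1)^{3/2}$. The \emph{lower bound} demands more care. A two-stage decomposition suffices: first force the walk to survive up to time $t/2$ and reach height $\asymp \sqrt{t}/2$, which has probability $\gtrsim (x - C_*)/\sqrt{t}$ for $x \in [C_*, \sqrt{t}]$ by the flat-barrier survival bound; second, since $\sqrt{t} \gg \log t$, the log-barrier on $[t/2, t]$ is dominated by the walk's height, and the flat-barrier ballot estimate applies on that interval to furnish an additional factor $\asymp 1/\sqrt{t}$. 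The product matches the desired lower bound $(x - C_*)/(C_* (t+1)^{3/2})$. The threshold $L$ is chosen large enough that the $O(\log t)$-shifted target always admits a uniform local CLT lower bound.

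The principal obstacle is the \emph{lower bound}. A naive shift of the barrier by $|D|\log(t+1)$ would forfeit a factor of $\log t$ in the ballot estimate, violating the sharp $(t+1)^{-3/2}$ scaling. The two-stage decomposition above circumvents this by exploiting the separation of scales $\log t \ll \sqrt{t}$: once the walk has diffused to its natural scale, the log-barrier is irrelevant and the classical ballot estimate applies cleanly. The restriction $x \leq \sqrt{t}$ and the subtraction of the constant $C_*$ in the lower bound precisely quantify the room required for this two-scale argument.
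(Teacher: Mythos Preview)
Your strategy—reduce to a flat-barrier ballot theorem and then treat the logarithmic drift as a perturbation—matches the paper's architecture. However, there is a genuine gap in your upper bound, and your lower-bound sketch has the same defect.

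For the upper bound, the claim that ``monotonicity of $h$ lets us dominate the moving barrier by a constant'' does not deliver the sharp $(x+1)(t+1)^{-3/2}$. The sharp bound comes from three factors of $t^{-1/2}$: forward survival on $[0,t/3]$, backward survival from the target on $[2t/3,t]$, and a local limit theorem on the middle third. On $[0,t/3]$ the barrier is indeed within $O(1)$ of zero, so the forward factor is fine. But the reversed walk from $y\in(L,2L)$ sees the drift $\bar f(s)=-D\log(s+1)$, whose magnitude reaches $|D|\log t$ over $[0,t/3]$; replacing this by its extremal constant value costs exactly the $\log t$ factor you correctly flagged as fatal for the lower bound. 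The paper closes this gap with a Pemantle--Peres estimate (Theorem~\ref{thm:subdiffusive} in the appendix): for any increasing barrier $f$ with $\int_0^\infty f(s)(s+1)^{-3/2}\,\mathrm{d}s<\infty$, the probability that $X$ stays above $-f$ on $[0,s]$ is still $O(s^{-1/2})$. This is the nontrivial input you are missing; once it is available, both forward and reversed survival with the log drift are $O(t^{-1/2})$ directly (Lemma~\ref{lem:drift-hitting}\ref{item:drift-hitting-upper}), and the three-piece upper bound assembles cleanly.

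Your lower-bound two-stage decomposition has the right spirit but the same defect. The stated stage-2 factor should be $\asymp 1/t$, not $1/\sqrt t$ (ballot from height $\sqrt t$ into a bounded window over time $t/2$). More seriously, the assertion that ``the log-barrier on $[t/2,t]$ is dominated by the walk's height'' fails near $s=t$, where the walk must be at height $O(1)$ while the barrier has moved by $|D|\log(t+1)$. Read in reverse, your stage 2 is a survival problem for a walk starting at height $O(1)$ against a barrier growing like $\log s$—precisely the situation a constant shift cannot handle. The paper instead runs a symmetric three-piece argument: forward and backward survival to diffusive height on $[0,\alpha t]$ and $[(1-\alpha)t,t]$ respectively, each $\gtrsim t^{-1/2}$ via the log-drift survival Lemma~\ref{lem:drift-hitting}\ref{item:drift-hitting-lower}, together with a bridge estimate on the middle interval controlled by Stone's local limit theorem and an explicit subtraction of excursion events. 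The Pemantle--Peres input is again essential; your two-scale heuristic does not replace it.
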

We recall that $L$ is the width of the initial data $\tbf{1}_{(L, 2L)}$ in \eqref{eq:Dirichlet}.
For the remainder of the paper, we let it assume the value given by Lemma~\ref{lem:key}.

\begin{remark}
  This decay at rate $t^{-\frac 3 2}$ cancels the time-dependent growth term $\frac{3}{2(t + t_0)} \bar v$ in \eqref{eq:transformed-upper}, which is due to $- \lambda_* \dot \sigma \bar v$.
  This ensures that $u$ is order $1$ at position $\sigma$.
  Thus the time-decay in Lemma~\ref{lem:key} justifies the coefficient $-\frac{3}{2 \lambda_*}$ of the Bramson correction.
\end{remark}

By \eqref{eq:FK}, Lemma~\ref{lem:key} belongs to a family of ``ballot theorems'' widely used in the theory of branching processes; see, for instance, the survey~\cite{ABR2}.
In particular, our lemma is simply a continuous-time version of Lemma~3.2 in~\cite{Mallein}.
For the sake of completeness, we prove Lemma~\ref{lem:key} in the appendix.

\subsection{An upper bound}

With Lemma~\ref{lem:key}, we can easily construct a supersolution for $\bar v$.
Let $D = -\frac{3}{2\lambda_*}$ in \eqref{eq:Dirichlet}, and define $I \coloneqq [C_* + 1, C_* + 2]$ with the constant $C_*$ given by Lemma~\ref{lem:key}.
Then the lower bound in Lemma~\ref{lem:key} implies the existence of $\delta, T > 0$ such that
\begin{equation}
  \label{eq:lower-interval}
  (t + 1)^{\frac 3 2} z(t, x) \geq \delta
\end{equation}
for all $(t, x) \in [T, \infty) \times I$.
We define
\begin{equation*}
  \bar w(t, x) \coloneqq \delta^{-1} (t + T + 1)^{\frac 3 2} z(t + T, x + C_* + 2)
\end{equation*}
so that \eqref{eq:lower-interval} becomes
\begin{equation*}
  \bar w \geq 1 \quad \textrm{on } [0, \infty) \times [-1, 0].
\end{equation*}
By the upper bound in Lemma~\ref{lem:key}, there exists $C > 0$ such that
\begin{equation}
  \label{eq:super-upper}
  \bar w(t, x) \leq C (x + 1) \ForAll (t, x) \in [0, \infty) \times [0, \infty).
\end{equation}

Now, $\bar w$ nearly solves \eqref{eq:transformed-upper} on $[0, \infty)$.
It is only missing the negative nonlinearity $F$, so $\bar w$ is a supersolution to \eqref{eq:shift}, provided we take $t_0 = T + 1$.
Also,
\begin{equation*}
  \bar{w} \geq 1 \geq \bar v
\end{equation*}
on the augmented boundary $[0, \infty) \times [-1, 0]$.
This is the region that the nonlocal kernel $K$ ``sees'' from within $\R_+$.
Furthermore,
\begin{equation*}
  \bar w(0, \anon) \geq 0 \geq \bar v(0, \anon) \quad \textrm{on } [0, \infty).
\end{equation*}
Therefore, the comparison principle implies
\begin{equation}
  \label{eq:super-comp}
  \bar w \geq \bar v \quad \textrm{on } [0, \infty) \times [0, \infty).
\end{equation}

Returning to our solution $u$, \eqref{eq:super-upper} and \eqref{eq:super-comp} yield
\begin{equation*}
  u(t, x + \sigma(t)) \leq \e^{-\lambda_* x} \bar w(t, x) \leq C (x + 1) \e^{-\lambda_* x}
\end{equation*}
for all $(t, x) \in [0, \infty) \times [0, \infty)$.
Since the right side vanishes in the $x \to \infty$ limit,
\begin{equation*}
  \sigma_\theta(t) \leq \sigma(t) + C_\theta
\end{equation*}
for all $\theta \in (0, 1)$ and some $C_\theta > 0$.

\subsection{A lower bound}

We now construct a subsolution to \eqref{eq:main} to establish the lower bound in Theorem~\ref{thm:reg}.
For the upper bound, we studied $\e^{\lambda_* x} u$ in the moving frame $c_* t - \frac{3}{2\lambda_*}\log\frac{t + t_0}{t_0}$.
This was chosen so that solutions to a corresponding linear Dirichlet problem remain bounded in time away from $0$ and $\infty$ (locally in space).

We consider a similar transformation in this section, but must now contend with the nonlinear absorption.
To make the nonlinearity negligible, we'd like $u$ to be small.
Following~\cite{BHR}, we use a different logarithmic shift, to induce polynomial decay in time.
Fix
\begin{equation*}
  D_\gamma > \max\left\{\frac{1}{\lambda_*} \left(\frac{1}{\gamma} - \frac{3}{2}\right), 0\right\},
\end{equation*}
where $\gamma \in (0, 1)$ is the H\"{o}lder exponent from \ref{hyp:reg}.
Then we study
\begin{equation*}
  \ubar v(t, x) \coloneqq \e^{\lambda_* x} u\left(t, x + c_*t + D_\gamma \log(t + 1)\right),
\end{equation*}
which satisfies
\begin{equation}
  \label{eq:transformed-lower}
  \partial_t \ubar v = \m L \ubar v + \frac{D_\gamma}{t + 1} (\partial_x \ubar v - \lambda_* \ubar v\,) + \e^{\lambda_* x} F\big(\e^{-\lambda_* x} \ubar v\,\big).
\end{equation}

Now let $z$ solve \eqref{eq:Dirichlet} with $D = D_\gamma$.
By Lemma~\ref{lem:key},
\begin{equation*}
  z(t, x) \leq \frac{C_* (x + 1)}{(t + 1)^{\frac{3}{2}}} \quad \textrm{on } [0, \infty) \times [0, \infty).
\end{equation*}
Thus
\begin{equation*}
  \zeta(t, x) \coloneqq (t + 1)^{-\lambda_* D_\gamma} z(t, x)
\end{equation*}
solves the linearization of \eqref{eq:transformed-lower} and satisfies
\begin{equation}
  \label{eq:poly-decay}
  \zeta(t, x) \leq \frac{C_* (x + 1)}{(t + 1)^\beta}
\end{equation}
for
\begin{equation*}
  \beta \coloneqq \frac{3}{2} + \lambda_* D_\gamma > \frac{1}{\gamma}.
\end{equation*}

We cannot simply use $\zeta$ as a subsolution, since the nonlinearity $F$ in \eqref{eq:transformed-lower} is negative.
Therefore define
\begin{equation*}
  \ubar w_+(t, x) \coloneqq a(t) \zeta(t, x + 2L)
\end{equation*}
for some decreasing temporal profile $a$ to be determined.
For $\ubar w_+$ to be a subsolution to \eqref{eq:transformed-lower}, we require
\begin{equation*}
  \frac{\dot a}{a} \ubar w_+ \leq \e^{\lambda_* x} F(\e^{-\lambda_* x} \ubar w_+).
\end{equation*}
By \ref{hyp:reg}, there exists $C_F > 0$ such that
\begin{equation*}
  \abs{F(s)} \leq C_F s^{1 + \gamma}.
\end{equation*}
Recall that $\zeta(t, x) = 0$ for all $x \leq 0$.
Thus by \eqref{eq:poly-decay},
\begin{align*}
  \ubar w_+^{-1} \e^{\lambda_* x} \big|F\big(\e^{-\lambda_* x} \ubar w_+\big)\big| &\leq C_F \e^{-\gamma \lambda_* x} \ubar w_+^\gamma\\
                                                                 &\leq C_F C_*^\gamma \e^{-\gamma \lambda_* x} (x + 2L + 1)^\gamma a(t)^\gamma (t + 1)^{-\beta \gamma}\\
                                                                 &\leq C a(t)^\gamma (t + 1)^{-\beta \gamma}
\end{align*}
for some $C > 0$.
It thus suffices to let $a$ solve
\begin{equation*}
  \dot a = -C a^{1 + \gamma} (t + 1)^{-\beta \gamma}.
\end{equation*}
Because $\beta \gamma > 1$, positive solutions will remain uniformly bounded away from $0$.
We choose the solution with $a(0) = \e^{-\lambda_* L}$, so that
\begin{equation*}
  \ubar w_+(0, \anon) \leq \ubar v(0, \anon).
\end{equation*}
Then by the comparison principle,
\begin{equation*}
  \ubar w_+ \leq \ubar v.
\end{equation*}

We now use the lower bound in Lemma~\ref{lem:key}.
By \eqref{eq:key} and our construction of $\ubar w_+$, there exist $\delta, T > 0$ such that
\begin{equation*}
  \ubar v\big(t, \sqrt{t} - D_\gamma \log (t + 1)\big) \geq \delta t^{\frac{1}{2} - \beta} \ForAll t \geq T.
\end{equation*}
Returning to the original solution, we have shown that
\begin{equation*}
  u\big(t, c_* t + \sqrt{t}\big) \geq \frac{\delta}{t} \e^{-\lambda_* \sqrt{t}} \ForAll t \geq T.
\end{equation*}
That is, we can control $u$ at the diffusive scale.
Furthermore, the comparison principle implies $u(t, \anon)$ is decreasing for all $t \geq 0$.
So
\begin{equation}
  \label{eq:sub-right}
  u\big(t, c_* t + \sqrt{t} - B\big) \geq \frac{\delta}{t} \e^{-\lambda_* \sqrt{t}}
\end{equation}
for all $t \geq T$ and $B \geq 0$.

Before using \eqref{eq:sub-right}, we need a lower bound on $u$ to the left of $x = c_* t$.
This is much simpler.
We extend $f$ by zero to $[-1,1]$.
Then $f$ is a reaction of \emph{ignition type} on this extended interval.
In~\cite{Coville}, Coville shows the existence of a non-increasing front $\ubar U$ solving
\begin{equation*}
  \begin{gathered}
    \mu(J \ast \ubar U - \ubar U\,) + \ubar c \, \ubar U' + f(\ubar U) = 0,\\
    -1 \leq \ubar{U} \leq 1, \quad \ubar U(-\infty) = 1, \quad \ubar U(+\infty) = -1
  \end{gathered}
\end{equation*}
for a unique speed $\ubar c$.
We need a speed strictly less that $c_*$, so let $c' \coloneqq \min\big\{\ubar{c}, c_* - 1\big\}$.
Then $c' \leq \ubar{c}$ implies that $\ubar{U}(x - c't)$ is a subsolution to \eqref{eq:main}.
Hence if we shift $\ubar U$ so that $u(0, \anon) \geq \ubar U$, the comparison principle implies
\begin{equation*}
  u(t,x) \geq \ubar U(x - c' t) \ForAll (t, x) \in [0, \infty) \times \R.
\end{equation*}
It follows that there exists $B \geq 0$ such that
\begin{equation}
  \label{eq:u-left}
  u(t, c' t - B) \geq \frac 1 2 \ForAll t \geq 0
\end{equation}
and
\begin{equation}
  \label{eq:u-init}
  u(T, x) \geq \frac 1 2 \ForAll x \in \big[c' T - B - 1, c_* T + \sqrt{T} - B\big].
\end{equation}

We leverage these bounds to construct a traveling wave subsolution to \eqref{eq:main}.
Crucially, we need a wave with speed $c_*$.
Let $\ti f$ be a function on $\left[0,\frac 1 2\right]$ satisfying $\ti f \leq f$, $\ti f'(0) = f'(0)$, and \ref{hyp:reg}--\ref{hyp:KPP} with $\left[0,\frac 1 2\right]$ in the place of $[0, 1]$.
Then $\ti f$ is a KPP reaction on a restricted interval.
Applying Theorem~1.3 in~\cite{CDM} to the supersolution $\e^{-\lambda_* x}$, there exists a monotone front $\ti U$ satisfying
\begin{equation*}
  \begin{gathered}
    \mu(J\ast \ti U - \ti U) + c_*\ti U' + \ti f(\ti U) = 0,\\
    0 \leq \ti U \leq \frac 1 2, \quad \ti U(-\infty) = \frac 1 2, \quad \ti U(+\infty) = 0.
  \end{gathered}
\end{equation*}
Moreover, Theorem~1.6 in~\cite{CDM} shows
\begin{equation}
  \label{eq:front-decay}
  \ti U(s) \lesssim s \, \e^{-\lambda_*s} \quad \textrm{as } s \to \infty.
\end{equation}
We note that the stated theorem assumes that $J$ is differentiable and $f$ satisfies an additional technical condition.
However, an examination of the proof shows that \eqref{eq:front-decay} holds without these hypotheses when $\ti{U}$ is monotone.

Hence, we can translate $\ti U$ so that
\begin{equation}
  \label{eq:wave-small}
  \ti U\Big(\sqrt{t} + \frac{3}{2\lambda_*}\log(t+1) - B - 1\Big) \leq \frac{\delta}{t} \e^{-\lambda_* \sqrt{t}}
\end{equation}
for all $t \geq T.$
We define
\begin{equation*}
  \ubar w(t,x) \coloneqq \ti U\Big(x - c_*t + \frac{3}{2\lambda_*}\log(t+1)\Big).
\end{equation*}
Let
\begin{equation*}
  \m D \coloneqq \big\{(t,x)\in \R_+ \times \R \mid t > T, \enspace c' t < x + B < c_* t + \sqrt t - 1\big\},
\end{equation*}
and define the augmented boundary $\partial^*\m D$ by
\begin{equation*}
  \partial^* \m D \coloneqq \big\{(t,x) \in \m D^c \mid (t,x') \in \bar{\m D} \textrm{ for some } x'\in \R \textrm{ with } \abs{x - x'} < 1\big\}.
\end{equation*}
Again, this is the subset of $\R_+\times \R$ that can directly influence $\m D$ through the nonlocal kernel $J$.
By \eqref{eq:sub-right}--\eqref{eq:wave-small}, we have constructed $\ubar w$ so that $\ubar w \leq u$ on $\partial^* \m D$.
Furthermore, $\ubar w$ is a subsolution to \eqref{eq:main} because $\ti f \leq f$ and $\ti U$ is decreasing in $x$.
It follows from the comparison principle that
\begin{equation*}
  \ubar w \leq u \textrm{ on } \m D.
\end{equation*}

Now take $\theta \in \left(0,\frac 1 2\right)$.
If we let
\begin{equation*}
  \ubar\sigma_\theta(t) \coloneqq c_* t - \frac{3}{2\lambda_*}\log(t + 1) + \ti U^{-1}(\theta),
\end{equation*}
we have $\ubar w(t, \ubar \sigma_\theta(t)) = \theta$.
Furthermore, $c' < c_*$, so $(t, \ubar \sigma_\theta(t)) \in \m D$ once $t$ is sufficiently large.
Since $u \geq \ubar w$ in $\m D$,
\begin{equation*}
  \sigma_\theta(t) \geq \ubar \sigma_\theta(t)
\end{equation*}
for $t$ sufficiently large.
This is a lower bound for Theorem~\ref{thm:reg}.
For $\theta\in \left[\frac 1 2, 1\right)$, we can repeat the above argument using a different $\ti f$ defined instead on $\left[0, \frac{\theta + 1}{2}\right]$.
This completes the proof of Theorem~\ref{thm:reg}.

\section{Irregular equations}
\label{sec:irreg}

We now turn to Theorem~\ref{thm:trapping} and Proposition~\ref{prop:crit}.
In both cases ${c_* = 0}$, and we are interested in the behavior of stationary fronts.
It is important to note that the continuity and uniqueness of stationary fronts is a delicate issue~\cite{CDM}.
As we shall see, both these pleasant properties can fail in the irregular setting.

\begin{proof}[Proof of Theorem~\textup{\ref{thm:trapping}}.]
  Let $(\mu, J, f)$ be trapping.
  As shown in the introduction, this implies $\op{supp}J \subset [-1,0]$ and $f'(0) > \mu$.
  It follows that the infimum in \eqref{eq:FG} occurs at $\lambda = +\infty$, so $c_* = 0$ and a front $U$ satisfies
  \begin{equation*}
    \mu(J \ast U - U) + f(U) = 0.
  \end{equation*}
  Such fronts need not be unique (even up to translation), but they do exist.
  Indeed, $\op{supp} J \subset [-1,0]$ implies that $\tbf{1}_{\R_-}$ is a supersolution to \eqref{eq:main}.
  By Theorem~1.3 of~\cite{CDM}, a monotone front $U$ exists.

  Define
  \begin{equation*}
    \m F \coloneqq \{s \in [0,1] \mid f(s) > \mu s\}.
  \end{equation*}
  Since $f'(0) > \mu$, $\m F$ contains a nontrivial interval of the form $(0, \theta_0)$.
  But
  \begin{equation*}
    \mu U - f(U) = \mu \, J \ast U \geq 0,
  \end{equation*}
  so $U$ cannot assume any value in $\m F$.
  Since $U(+\infty) = 0$, the profile $U$ must jump \emph{discontinuously} down to $0$ at a finite position.
  By shifting $U$, we may assume that $U(x) = 0$ for all $x \geq 0$.

  Now consider the evolution of $u$ from $\tbf{1}_{\R_-}$.
  We have already noted that $\tbf{1}_{\R_-}$ is a supersolution.
  This is obvious for probabilistic reactions: particles can only jump to the left, so they can never populate $\R_+$.
  On the other hand, the stationary front $U$ is a solution to \eqref{eq:main}, and $U \leq u(0, \anon)$.
  By the comparison principle,
  \begin{equation*}
    U(x) \leq u(t,x) \leq \tbf{1}_{\R_-}(x) \quad \textrm{for all } (t,x) \in [0,\infty) \times \R.
  \end{equation*}
  So $U^{-1}(\theta) \leq \sigma_\theta(t) \leq 0$ for all $\theta \in (0,1)$ and $t \geq 0$.
\end{proof}

\begin{remark}
  More generally, we have shown that $\sigma_\theta(t) = \m O_\theta(1)$ whenever \eqref{eq:main} admits a stationary front that vanishes identically on a positive ray.
  With this observation, we can show that $\sigma_\theta$ is bounded for some critical equations.
  For instance, suppose $\mu = 1,$ $J = \tbf{1}_{[-1,0]}$, and $f(u) = u - u^2$.
  Then a stationary front $U$ satisfies
  \begin{equation}
    \label{eq:unif}
    \int_x^{x+1} U(y) \d y = U(x)^2 \quad \textrm{for all } x \in \R.
  \end{equation}
  Suppose we dictate $U|_{[0, \infty)} \equiv 0$.
  Then for $x\in [-1,0]$, \eqref{eq:unif} becomes
  \begin{equation*}
    \int_x^0 U(y) \d y = U(x)^2.
  \end{equation*}
  Differentiating, we arrive at the ODE $-U = 2UU'$ on $(-1,0)$ with boundary condition $U(0) = 0$.
  By inspection, we have a solution $U(x) = -\frac 1 2 x$ on $[-1,0]$.
  We can then iteratively solve \eqref{eq:unif} on intervals of the form $[-n,1-n]$ for $n\in \N$ to construct a continuous stationary front $U$ that vanishes identically on $\R_+$.
  Thus even in this critical case, $\sigma_\theta(t) = \m O_\theta(1)$.
  Note that this classical reaction is probabilistic: it corresponds to a binary branching random walk.
  Hence this result agrees with~\cite{Bramson-loglog}.

  This argument extends to some non-probabilistic reactions as well.
  Suppose $\mu = 1$, $\op{supp} J \subset [-1, 0]$, and $f(u) \equiv u$ in an open neighborhood of $0$.
  Then $J \ast U = 0$ once $U$ is sufficiently small, so again $U$ vanishes on a positive ray and $\sigma_\theta(t) = \m O_\theta(1)$.
\end{remark}

We now study a special family of critical equations.
\begin{proof}[Proof of Proposition~\textup{\ref{prop:crit}}.]
  By rescaling time, we can reduce to the case $\mu = 1$.
  Then $J = \delta_{-1}$ and $f(u) = u - u^p$ for $p > 1$ fixed.
  A stationary front $U$ satisfies
  \begin{equation*}
    U(x + 1) = U(x)^p \quad \textrm{for all } x \in \R.
  \end{equation*}
  In this case we can explicitly construct a monotone front:
  \begin{equation*}
    U(x) \coloneqq \exp(-p^x).
  \end{equation*}
  In fact, there is an obvious alternative:
  \begin{equation*}
    \ti U(x) \coloneqq U(\floor{x}).
  \end{equation*}
  This equation thus admits nonunique monotone stationary fronts.
  For analytic convenience, we work with $U.$

  As noted in the previous proof, $u = 0$ on $[0,\infty) \times [0,\infty)$.
  Furthermore, we can explicitly compute the solution on $[0, \infty) \times [-1,0)$.
  Indeed,
  \begin{equation*}
    J \ast u(t, x) = u(t, x + 1) = 0
  \end{equation*}
  for all $x \in [-1, 0)$, so
  \begin{equation*}
    \partial_tu = - u^p, \quad u(0, x) = 1.
  \end{equation*}
  Solving this Riccati-type equation, we obtain
  \begin{equation}
    \label{eq:Riccati}
    u(t,x) = \left[(p - 1)t + 1\right]^{-\frac{1}{p - 1}} \quad \textrm{for all } (t,x) \in [0,\infty) \times [-1,0).
  \end{equation}
  In principle, $u$ can be found by iteratively solving ODEs for its values on $[-n, 1 - n)$ with $n \in \N$, but we need not perform such calculations.

  Instead, we construct super- and subsolutions to \eqref{eq:main} on $\R_-$.
  Combining these with the explicit solution \eqref{eq:Riccati} on the ``buffer zone'' $[-1,0)$, we can control $u$ via the comparison principle.
  We begin with the subsolution.
  Define the decreasing shift
  \begin{equation*}
    \sigma_-(t) \coloneqq -U^{-1}(u(t, -1)) + 1
  \end{equation*}
  and
  \begin{equation*}
    \ubar{w}(t,x) \coloneqq U(x - \sigma_-(t)).
  \end{equation*}
  Note that
  \begin{equation*}
    \partial_t \ubar{w} - J \ast \ubar{w} + \ubar{w}^p = \partial_t \ubar{w} = -\dot \sigma_- U' < 0,
  \end{equation*}
  so $\ubar{w}$ is a subsolution to \eqref{eq:main} on $\R_-$.
  By construction,
  \begin{equation*}
    \ubar{w}(0, \anon) = 1 = u(0, \anon) \quad \textrm{on } \R_-
  \end{equation*}
  and
  \begin{equation*}
    \ubar{w}(t, x) \leq \ubar{w}(t, -1) = u(t, x) \quad \textrm{on } [0, \infty) \times [-1, 0).
  \end{equation*}
  By the comparison principle,
  \begin{equation}
    \label{eq:crit-sub}
    u(t,x) \geq \ubar{w}(t,x) \quad \textrm{for all } (t,x) \in [0,\infty) \times \R_-.
  \end{equation}

  We now construct a supersolution.
  Define
  \begin{equation*}
    \sigma_+(t) \coloneqq -\frac{\log\log(t + 1)}{\log p} + 1
  \end{equation*}
  and
  \begin{equation*}
    \bar{w}(t, x) \coloneqq \Omega \, U(x - \sigma_+(t)) = \Omega (t + 1)^{-p^{x - 1}}
  \end{equation*}
  for some $\Omega = \Omega(p) > 1$ to be determined.
  Then
  \begin{equation*}
    \partial_t \bar{w} = - \frac{p^{x - 1}}{t+1} \bar{w}
  \end{equation*}
  and
  \begin{equation*}
    J \ast \bar{w} - \bar{w}^p = - (1 - \Omega^{1 - p}) \bar{w}^p = -(\Omega^{p - 1} - 1) (t + 1)^{-(p - 1) p^{x - 1}} \bar{w}.
  \end{equation*}
  So
  \begin{equation*}
    \partial_t \bar{w} - J \ast \bar{w} + \bar{w}^p = \left[(\Omega^{p - 1} - 1)(t + 1)^{-(p - 1) p^{x - 1}} - p^{x - 1} (t + 1)^{-1}\right] \bar{w}.
  \end{equation*}
  Suppose $x \leq 0$.
  Then
  \begin{equation*}
    (\Omega^{p - 1} - 1)(t + 1)^{-(p - 1) p^{x - 1}} - p^{x - 1} (t + 1)^{-1} \geq \left(\Omega^{p - 1} - 1 - p^{-1}\right) (t + 1)^{-1}.
  \end{equation*}
  We thus choose
  \begin{equation*}
    \Omega = \left(\frac{p + 1}{p}\right)^{\frac{1}{p - 1}},
  \end{equation*}
  so that $\bar{w}$ is a supersolution to \eqref{eq:main}.
  Also,
  \begin{equation*}
    \bar{w}(0,\anon) = \Omega > 1 \geq u(0, \anon).
  \end{equation*}
  When $p \geq 2$,
  \begin{equation*}
    \bar{w}(t, x) \geq \bar{w}(t, 0) = \Omega (t + 1)^{-\frac{1}{p}} \geq [(p - 1) t + 1]^{-\frac{1}{p}} > u(t, x)
  \end{equation*}
  for $x \in [-1, 0)$.
  When $p \in (1, 2)$, the function $(t + 1)^{p - 1}$ is concave in $t$, so
  \begin{equation*}
    (t + 1)^{p-1} \leq (p - 1)t + 1.
  \end{equation*}
  It follows that
  \begin{equation*}
    \bar{w}(t, x) > (t + 1)^{-1} \geq [(p - 1)t + 1]^{-\frac{1}{p - 1}} = u(t, x)
  \end{equation*}
  for $x \in [-1, 0)$.
  Since this holds for all $p > 1$, the comparison principle implies
  \begin{equation}
    \label{eq:crit-super}
    \bar{w}(t,x) \geq u(t,x) \quad \textrm{for all } (t,x) \in [0,\infty) \times \R_-.
  \end{equation}
  By construction, $\ubar{w}$ and $\bar{w}$ are fixed profiles drifting by $\sigma_-$ and $\sigma_+$, respectively.
  Furthermore, there exists $C(p) > 0$ such that
  \begin{equation*}
    \abs{\sigma_\pm(t) + \frac{\log \log t}{\log p}} \leq C \quad \textrm{for all } t \geq 2.
  \end{equation*}
  Thus \eqref{eq:crit-sub} and \eqref{eq:crit-super} imply Proposition~\ref{prop:crit}.
\end{proof}

\section*{Appendix}

In this appendix, we present a proof of Lemma~\ref{lem:key}.
By the Feynman--Kac representation \eqref{eq:FK}, we must control the probability that the log-drifting continuous-time random walk $Y_s^x$ moves from $x$ to $(L, 2L)$ in time $t$ while remaining positive.
The branching random walk literature is littered with such results, but all hold in \emph{discrete} time~\cite{ABR2, BDZ, Mallein}.
Unfortunately, the continuous-time theory is far less developed.
It therefore seems useful to record a proof of Lemma~\ref{lem:key}.

In the course of our proof, we draw on a variety of classical bounds for discrete-time random walks.
We reprove some in our setting, but for others we simply cite the original source, with the understanding that the adaptation to continuous time is transparent.
We choose to follow the ``hands on'' approach of Addario-Berry and Reed in~\cite{ABR2}, which seems quite robust.
Precisely, we modify Theorem~1 in~\cite{ABR2} to handle the continuous-time walk $Y$ with logarithmic drift.

\subsection{Preliminary bounds}

We begin by recalling the definitions of our random walks.
Our central character is $(X_s)_{s \geq 0}$, a continuous-time random walk with jump rate $\nu$, jump law $\bar{K}$ (the spatial reverse of $K$), and constant drift $c_*$ starting from $0$.
In this appendix, let $\m X$ be distributed according to $\bar{K}$.
Then $X_s - c_* s$ is a Poissonization of $\m X$ with rate $\nu s$.
We use this special form to compute the characteristic function $\varphi_{X_s}$ of $X_s$.
\begin{equation}
  \label{eq:characteristic}
  \varphi_{X_s}(\xi) \coloneqq \E \e^{\iu \xi X_s} = \e^{-\nu s + \iu c_* s \xi} \sum_{k = 0}^{\infty} \frac{1}{k!} \left(\nu s \E \e^{\iu \xi \m X}\right)^k = \e^{\nu s [\varphi_{\m X}(\xi) - 1 + \iu m \xi]},
\end{equation}
recalling $m = \frac{c_*}{\nu}$.
We note that the sum in \eqref{eq:characteristic} converges for any $\xi \in \C$, because $\bar{K}$ is compactly supported.
That is, $X_s$ has all exponential moments.
From \eqref{eq:characteristic}, we can compute
\begin{equation}
  \label{eq:moments}
  \E X_s = 0 \And \E X_s^2 = \nu \E \m X^2 s.
\end{equation}
Since $X_s$ has mean zero, the process $(X_s)_{s \geq 0}$ is a martingale.

Given $(t, x) \in [0, \infty) \times \R_+$, we define the log-drifting walk
\begin{equation*}
  Y_s^x \coloneqq X_s + x + D \log \frac{t + 1}{t - s + 1} \quad \textrm{for } 0 \leq s \leq t.
\end{equation*}

To adapt the proof of Theorem~1 in~\cite{ABR2} to $Y$, we require two prior results: Stone's local limit theorem~\cite{Stone} and Lemma~3.3 from Peres and Pemantle~\cite{PP}.
The former is indifferent to logarithmic drifts, so we state it for $X_s$.
It provides upper and lower bounds on the dispersal of $X_s$ on the line.
\begin{proposition}[Stone]
  \label{prop:LLT}
  For each $h \geq 2$, there exists $C_S(h, K, \nu) \geq 1$ such that
  \begin{equation*}
    \P[x \leq X_s \leq x + h] \leq \frac{C_S}{\sqrt{s}}\exp\left[-\frac{x^2}{2 \nu \E \m X^2 \,s}\right] + \smallO_h\big(s^{-\frac 1 2}\big)
  \end{equation*}
  and
  \begin{equation*}
    \P[x \leq X_s \leq x + h] \geq \frac{1}{C_S \sqrt{s}}\exp\left[-\frac{x^2}{2 \nu \E \m X^2 \,s}\right] + \smallO_h\big(s^{-\frac 1 2}\big)
  \end{equation*}
  for all $(s, x) \in \R_+ \times \R.$
  The error terms satisfy $s^{\frac 1 2} \smallO_h\big(s^{-\frac 1 2}\big) \to 0$ as $s \to \infty$ uniformly in $x \in \R$ and $h$ in a compact subset of $[2, \infty)$.
\end{proposition}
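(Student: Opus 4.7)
The plan is to obtain Proposition~\ref{prop:LLT} by classical Fourier analysis, exploiting the explicit compound Poisson representation of $X_s$ furnished by \eqref{eq:characteristic}. Because $\bar K$ is compactly supported, $X_s$ has all exponential moments and $\varphi_{X_s}$ is entire in $\xi$, so no delicate moment hypotheses intervene and the entire estimate reduces to sharp control of the inverse Fourier integral of $\varphi_{X_s}$.

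The core steps are standard. First I would smooth the indicator $\tbf{1}_{[x, x + h]}$: pick $\rho \in \m C_c^\infty(\R)$ with $\int \rho = 1$ and $\hat \rho$ supported in $[-A, A]$, and sandwich the indicator between $g_\pm \coloneqq \tbf{1}_{[x \mp \delta, x + h \pm \delta]} \ast \rho_\delta$, where $\rho_\delta(y) \coloneqq \delta^{-1} \rho(y / \delta)$ for a small $\delta > 0$. Each $g_\pm$ has Fourier transform supported in $[-A / \delta, A / \delta]$, so Plancherel yields
\begin{equation*}
  \E g_\pm(X_s - x) = \frac{1}{2\pi} \int_{\abs{\xi} \leq A / \delta} \hat g_\pm(\xi) \e^{-\iu \xi x} \varphi_{X_s}(\xi) \d \xi.
\end{equation*}
I would split this integral at an intermediate threshold $\abs{\xi} = \varepsilon$. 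On the central block, Taylor expansion of \eqref{eq:characteristic} combined with \eqref{eq:moments} yields $\varphi_{X_s}(\xi) = \exp[-\tfrac{1}{2} \nu \E \m X^2 s \xi^2](1 + \smallO(1))$ for $\abs{\xi} \ll 1$; substituting $\xi = \eta / \sqrt{s}$ and passing to the limit $s \to \infty$ converts this piece into the Gaussian density $(2 \pi \nu \E \m X^2 s)^{-1/2} \exp[-x^2 / (2 \nu \E \m X^2 s)]$ times $\int g_\pm$. Letting $\delta \to 0$, this delivers both the upper and lower bounds claimed in the proposition.

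The main obstacle is the tail piece $\abs{\xi} > \varepsilon$, where I would use
\begin{equation*}
  \abs{\varphi_{X_s}(\xi)} = \exp\bigl[-\nu s\bigl(1 - \op{Re} \varphi_{\m X}(\xi)\bigr)\bigr]
\end{equation*}
and require that $1 - \op{Re} \varphi_{\m X}$ be bounded below on $\varepsilon \leq \abs{\xi} \leq A/\delta$. This is routine when $\bar K$ is non-lattice, but fails for lattice $\bar K$ (for instance $\bar K = \delta_{-1}$), where $\op{Re} \varphi_{\m X}$ revisits $1$ on a reciprocal lattice. The saving grace is the hypothesis $\op{supp} \bar K \subset [-1, 1]$, which forces any lattice to have span at most $2$; the assumption $h \geq 2$ then guarantees that $[x, x + h]$ spans at least a full period, so the high-frequency resonances of $\varphi_{X_s}$ realign with the Gaussian leading order rather than producing oscillation. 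This matching of lattice span to interval length is precisely the lattice case of Stone's theorem in \cite{Stone}. Once it is dispatched, the remaining tail integrals are exponentially small in $s$ and absorbed into $\smallO_h(s^{-1/2})$, uniformly in $x$ and in $h$ on compact subsets of $[2, \infty)$.
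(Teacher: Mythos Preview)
Your approach is exactly what the paper has in mind: it does not prove Proposition~\ref{prop:LLT} at all, but simply observes that the explicit characteristic function \eqref{eq:characteristic} makes the classical Fourier-analytic proof of Stone's local limit theorem go through in continuous time, and refers the reader to~\cite{Stone}. Your sketch follows that route, and you correctly identify that the hypothesis $h \geq 2$ is there to absorb the lattice case (the paper's Remark after the proposition says as much).

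One technical slip: you cannot take $\rho \in \m C_c^\infty(\R)$ with $\hat\rho$ compactly supported, since a nonzero function and its Fourier transform cannot both have compact support. The standard fix is to take $\rho \geq 0$ of Schwartz class with $\int \rho = 1$ and $\hat\rho$ compactly supported (for instance $\rho = |\psi|^2$ with $\hat\psi \in \m C_c^\infty$); the sandwich then holds up to a rapidly decaying tail error, which is harmless. With that adjustment your outline is sound.
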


\begin{remark}
  This is a simplification of Stone's results, which are more precise when it is known whether $X_s$ is supported in a lattice.
  We remain agnostic on this point, and pay the price with the constant $C_S$.
  The restriction $h \geq 2$ in Proposition~\ref{prop:LLT} is necessary in the lattice case.
\end{remark}

This classical result is typically proved by Fourier analytic methods.
This is convenient in our setting, since $X_s$ has the simple characteristic function \eqref{eq:characteristic}.
Classical proofs thus easily adapt to continuous time, and we direct the reader to~\cite{Stone} for details.

We now turn to the essential bounds of Peres and Pemantle in~\cite{PP}, which concern discrete time random walks.
We require two modification: continuous time and a logarithmic drift.
The latter is significantly more serious.

We begin with bounds for $X_s$.
Define the hitting time
\begin{equation*}
  T_x \coloneqq \inf\{s \geq 0 \mid X_s + x < 0\}
\end{equation*}
parameterized by $x \geq 0$.
Thus $T_x$ is the first time at which $X$ makes an excursion to the left of size $x$.
\begin{lemma}
  \label{lem:hitting}
  There exist $C_1, C_2, c_3 > 0$ depending on $K$ and $\nu$ such that for all $s > 0$\textup{:}
  \begin{enumerate}[label = \textup{(\roman*)}, leftmargin = 5em, labelsep = 1em, itemsep= 1ex, topsep = 1ex]
  \item
    \label{item:hitting-upper}
    $\P[T_x > s] \leq C_1 \max\{x, 1\} s^{-\frac 1 2}$ for all $x \geq 0$;

  \item
    \label{item:hitting-conditioned}
    $\E\big[X_s^2 \mid T_x > s \big] \leq C_2 s$ for all $x \geq 1$;

  \item
    \label{item:hitting-lower}
    $\P[T_x > s] \geq c_3 \min\big\{x s^{-\frac 1 2}, 1\big\}$ for all $x \geq 1$.
  \end{enumerate}
\end{lemma}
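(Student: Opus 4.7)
My plan is to prove the three estimates by reducing to their discrete-time analogs, then transferring to continuous time via the exponentially small intra-interval fluctuations of $X$. Sampling $X$ at integer times yields a discrete walk $(S_n)_{n \geq 0}$ whose increments are i.i.d.\ with mean zero, variance $\nu \E \m X^2$, and all exponential moments (since $\bar K$ has compact support and the number of jumps of $X$ in a unit interval is Poisson with parameter $\nu$). The intra-interval oscillations $M_n := \sup_{s \in [n, n + 1]} \abs{X_s - X_n}$ are i.i.d.\ with exponential tails.

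For \ref{item:hitting-upper}, the event $\{T_x > s\}$ forces $S_k \geq -x$ for every integer $k \leq \floor{s}$, so it suffices to invoke the classical discrete ballot estimate $\P[\min_{k \leq n} S_k \geq -x] \leq C(x + 1)/\sqrt{n}$, which follows from optional stopping combined with a ladder-height identity, or by integrating Proposition \ref{prop:LLT}. For \ref{item:hitting-lower}, the analogous discrete lower bound $\P[\min_{k \leq n} S_k \geq -x] \geq c \min\{x/\sqrt{n}, 1\}$ for $x \geq 1$ is a standard Tanaka-type estimate. To transfer to continuous time, I would impose the buffered event that $S_k \geq -x/2$ for all $k \leq \floor{s}$ together with $M_k \leq x/4$ for all such $k$; this forces $X_t \geq -x$ for all $t \leq s$. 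The oscillation constraint is handled by a union bound using the exponential tail of $M_k$ once $x \geq C_0 \log s$; for smaller $x$ with large $s$, a Donsker-type strong approximation with Brownian motion yields the desired lower bound directly.

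The main technical obstacle is the conditional second-moment estimate \ref{item:hitting-conditioned}. Direct optional stopping on the quadratic martingale $X_{s \wedge T_x}^2 - \nu \E \m X^2 (s \wedge T_x)$ yields only $\E[X_s^2; T_x > s] \leq \nu \E \m X^2 \cdot s$, which after dividing by $\P[T_x > s] \gtrsim x/\sqrt{s}$ from \ref{item:hitting-lower} overshoots the target by a factor of $\sqrt{s}/x$. To close this gap, I would establish the sharper joint space--time bound
\begin{equation*}
  \P\left[X_s \in [y, y + 1], \, T_x > s\right] \leq \frac{C(x + 1)(y + x + 1)}{s^{3/2}} \exp\!\left(-\frac{y^2}{C s}\right) \ForAll y \geq -x.
\end{equation*}
Its derivation splits $[0, s]$ at the midpoint and applies the Markov property: the first half contributes the factor $(x + 1)/\sqrt{s}$ via \ref{item:hitting-upper}; the second half, treated by time reversal combined with \ref{item:hitting-upper} and Proposition \ref{prop:LLT}, contributes $(y + x + 1)/s$ times the Gaussian factor capturing the typical magnitude of $X_{s/2}$. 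Integrating $y^2$ against this density gives $\E[X_s^2; T_x > s] \leq C(x + 1) \sqrt{s}$, and dividing by the lower bound from \ref{item:hitting-lower} yields \ref{item:hitting-conditioned}.
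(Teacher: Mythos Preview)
Your overall strategy is sound, but for part \ref{item:hitting-conditioned} you have missed a much shorter argument and consequently built a considerably heavier machine than necessary. You write that optional stopping on the quadratic martingale gives only $\E[X_s^2;\, T_x > s] \leq \nu \E \m X^2 \cdot s$, because you bound $\E[s \wedge T_x] \leq s$ trivially. The paper instead observes that
\[
  \E[T_x \wedge s] = \int_0^s \P[T_x > r] \, \dn r \leq \int_0^s C_1 x\, r^{-1/2} \, \dn r = 2 C_1 x \sqrt{s},
\]
using part \ref{item:hitting-upper} directly. Wald's identity then gives $\E[X_s^2 \tbf{1}_{T_x > s}] \leq \E X_{T_x \wedge s}^2 = \nu \E \m X^2\, \E[T_x \wedge s] \leq C x \sqrt{s}$, and dividing by $\P[T_x > s] \geq c_3 x s^{-1/2}$ from \ref{item:hitting-lower} finishes. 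Your joint space--time density bound would also work and is a useful estimate in its own right, but it requires Proposition~\ref{prop:LLT} and a time-reversal argument, whereas the paper's route needs only the two survival bounds already at hand.

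For parts \ref{item:hitting-upper} and \ref{item:hitting-lower}, your discretization approach is a legitimate alternative to the paper's, which simply cites the continuous-time analogue of a standard reference and handles the easy regime $x \geq C\sqrt{s}$ via Kolmogorov's maximal inequality. One cautionary note on your transfer of \ref{item:hitting-lower}: the union bound on the oscillations $M_k$ only controls the range $x \geq C_0 \log s$, and your fallback to ``Donsker-type strong approximation'' for $1 \leq x \leq C_0 \log s$ is not quite enough as stated. Donsker gives weak convergence after diffusive rescaling, which does not by itself yield a uniform lower bound of the form $c\, x s^{-1/2}$ down to $x = 1$. You would need either a KMT-type strong coupling or, more simply, monotonicity in $x$ together with the classical Sparre Andersen / arcsine estimate $\P[T_1 > s] \asymp s^{-1/2}$ for the single case $x = 1$.
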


\begin{proof}[Proof of Lemma~\ref{lem:hitting}]
  First, we show that these bounds are trivial when $x \geq C \sqrt{s}$ for fixed $C > 0$ to be determined.
  Indeed, \ref{item:hitting-upper} is vacuous in this regime.
  By Kolmogorov's martingale maximal inequality,
  \begin{equation*}
    \P[T_x > s] \geq \P\left[\max_{s' \in [0, \, s]} \abs{X_{s'}} \leq x\right] \geq 1 - \frac{\E X_s^2}{x^2}.
  \end{equation*}
  By \eqref{eq:moments}, $\P[T_x > s] \geq \frac 1 2$ provided $x \geq C \sqrt{s}$ for
  \begin{equation*}
    C^2 = 2 \nu \E \m X^2.
  \end{equation*}
  Since $\E X_s^2 = \frac 1 2 C^2 s$, this proves both \ref{item:hitting-conditioned} and \ref{item:hitting-lower} when $x \geq C \sqrt{s}$.

  We may therefore assume that $x \leq C \sqrt{s}$.
  In discrete time, the ``gambler's ruin'' bounds \ref{item:hitting-upper} and \ref{item:hitting-lower} are well known.
  For instance, they follow from Theorem~5.1.7 in~\cite{LL}.
  That proof easily extends to continuous time, so we do not repeat it.

  This leaves \ref{item:hitting-conditioned} for $1 \leq x \leq C \sqrt{s}$.
  By \ref{item:hitting-upper},
  \begin{equation*}
    \E[T_x \wedge s] = \int_0^s \P[T_x \geq r] \d r \leq 2 C_1 x \sqrt{s}.
  \end{equation*}
  Thus by Wald's identity,
  \begin{equation*}
    \E [X_s^2 \tbf{1}_{T_x > s}] \leq \E X_{T_x \wedge s}^2 = \nu \E \m X^2 \, \E [T_x \wedge n] \leq 2 C_1 \nu \E \m X^2 x \sqrt{s}.
  \end{equation*}
  If we divide by $\P[T_x > s]$, \ref{item:hitting-lower} implies \ref{item:hitting-conditioned}.
\end{proof}

We'd like a version of these bounds for the log-drifting walk $Y$.
However, we need a replacement for Theorem~5.1.7 in~\cite{LL}.
This is the content of Theorem~3.2 in~\cite{PP}.
In continuous time, it reads:
\begin{theorem}
  \label{thm:subdiffusive}
  Let $f \colon [0, \infty) \to [0, \infty)$ be increasing and satisfy
  \begin{equation}
    \label{eq:subdiffusive-criterion}
    \int_0^\infty \frac{f(s)}{(s + 1)^{3/2}} \d s < \infty.
  \end{equation}
  Then there exist $C_4, s_0 \geq 1$ depending on $f$, $K$, and $\nu$ such that for all $s > 0$,
  \begin{equation}
    \label{eq:subdiffusive-lower}
    \P[X_r \geq f(r) \; \textrm{ for all } s_0 \leq r \leq s] \geq \frac{1}{C_4 \sqrt{s}}
  \end{equation}
  and
  \begin{equation}
    \label{eq:subdiffusive-upper}
    \P[X_r \geq -f(r) \; \textrm{ for all } 0 \leq r \leq s] \leq \frac{C_4}{\sqrt{s}}.
  \end{equation}
\end{theorem}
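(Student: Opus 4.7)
The plan is to adapt the discrete-time proof of Lemma~3.3 in~\cite{PP} to our continuous-time walk, using the ingredients already assembled. The characteristic function \eqref{eq:characteristic} shows that $X_s$ enjoys the same Gaussian scaling as an i.i.d.\ random walk, so Stone's local limit theorem (Proposition~\ref{prop:LLT}) and the gambler's-ruin bounds of Lemma~\ref{lem:hitting} play the same structural role they do in the discrete setting. The integrability condition \eqref{eq:subdiffusive-criterion} is the classical Kolmogorov integral criterion at scale $\sqrt{s}$; it is precisely the condition under which the barrier $f$ costs an $O(1)$ multiplicative factor rather than degrading the leading $s^{-1/2}$ rate.

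For the upper bound \eqref{eq:subdiffusive-upper}, I would decompose $[0, s]$ dyadically into blocks $[s_k, s_{k+1}]$ with $s_k = 2^k$. The event forces the walk restricted to $[s_k, s_{k+1}]$, viewed from its starting position $X_{s_k}$, to avoid the level $-f(s_{k+1}) - X_{s_k}$. Lemma~\ref{lem:hitting}(i) bounds this conditional probability by $C(X_{s_k} + f(s_{k+1}) + 1)/\sqrt{s_{k+1} - s_k}$. Combining this with the upper bound from Proposition~\ref{prop:LLT} on the density of $X_{s_k}$ conditioned on survival up to time $s_k$, these dyadic contributions chain together to give the desired $O(s^{-1/2})$ bound, with the summability of $f(s_{k+1})/\sqrt{s_{k+1}}$ over $k$ being precisely the dyadic form of \eqref{eq:subdiffusive-criterion}.

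For the lower bound \eqref{eq:subdiffusive-lower}, I would take a constructive route: for $s_0$ chosen sufficiently large, require the walk to lie in a window $[A\sqrt{s_k}, 2A\sqrt{s_k}]$ at each dyadic time $s_k \in [s_0, s]$, where $A$ is chosen large enough (depending on $f$) that $f(s_k) \leq \tfrac{A}{2}\sqrt{s_k}$ for all such $k$. This is possible because \eqref{eq:subdiffusive-criterion} forces $f$ to be subdiffusive. The probability of landing in each successive window is bounded below by $c/\sqrt{s_{k+1} - s_k}$ by Stone's lower bound, and the conditional probability of remaining above $f$ on each segment is bounded below by a positive constant via Lemma~\ref{lem:hitting}(iii). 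Multiplying these contributions telescopes to a bound of order $1/\sqrt{s}$.

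The main obstacle is the per-block estimate in the lower bound: on a dyadic segment $[s_k, s_{k+1}]$, with endpoints in windows of height $\Theta(A\sqrt{s_k})$, the probability of staying above $f$ throughout the block must be at least $1 - O(f(s_{k+1})/\sqrt{s_{k+1}})$, so that the product over $k \leq K \coloneqq \lfloor \log_2 s \rfloor$ converges to a positive constant. Brownian intuition makes this plausible, since fluctuations on the block are $O(\sqrt{s_{k+1} - s_k})$ and the barrier is subdiffusive, but making it rigorous requires a time-reversed application of Lemma~\ref{lem:hitting} combined with the Markov property at the dyadic times. The summability \eqref{eq:subdiffusive-criterion} is then exactly what ensures that the multiplicative loss accumulated over the $O(\log s)$ blocks remains bounded.
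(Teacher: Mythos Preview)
Your upper-bound sketch is headed in the right direction; the paper simply cites~\cite{PP} for \eqref{eq:subdiffusive-upper}, noting that Lemma~\ref{lem:hitting} replaces their Lemma~3.3 in continuous time, and the dyadic-chaining scheme you describe is essentially the Pemantle--Peres argument.

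The lower-bound argument, however, has a genuine gap. Requiring the walk to land in windows $[A\sqrt{s_k}, 2A\sqrt{s_k}]$ at each of the $K \asymp \log_2 s$ dyadic times costs a multiplicative factor per block, and that factor is a \emph{constant} $c(A) < 1$: the window width $A\sqrt{s_k}$ is comparable to the standard deviation of the block increment, so the transition from one scaling window to the next is an order-one event by the CLT, not $c/\sqrt{s_{k+1}-s_k}$ as you write. The product over all blocks is then $c(A)^K = s^{\log_2 c(A)}$, which is polynomial decay at the wrong exponent. Nothing in your scheme telescopes to $s^{-1/2}$, and the per-block barrier probability you flag in the final paragraph is not the real obstacle: that product does converge by \eqref{eq:subdiffusive-criterion}, but it sits on top of the divergent window product.

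The fix, which the paper carries out in the proof of Lemma~\ref{lem:drift-hitting}\ref{item:drift-hitting-lower} (a generalization of \eqref{eq:subdiffusive-lower}), is to extract the $s^{-1/2}$ from a \emph{single} event rather than a product. One conditions on $\{T_x > 2^{M+1}\}$, whose probability is already $\asymp x s^{-1/2}$ by Lemma~\ref{lem:hitting}\ref{item:hitting-lower}, and then shows that conditionally on staying above zero throughout, the walk also stays above $f$ with probability at least $1/2$. Concretely, the dyadic violation events $V_m^c = \{X_r + x < f(r) \textrm{ for some } r \in (2^{m-1}, 2^m]\}$ satisfy $\P[V_m^c \mid T_x > 2^{M+1}] \lesssim f(2^m) 2^{-m/2}$ via a first-violation-time argument and Lemma~\ref{lem:hitting}; these are summable by \eqref{eq:subdiffusive-criterion}, and an FKG step handles the initial stretch $[0, 2^{m_0}]$. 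In short, the curved barrier $f$ is an $O(1)$ perturbation of the flat barrier $0$, and the $s^{-1/2}$ comes for free from gambler's ruin rather than from a product of block transitions.
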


In their proof, Pemantle and Peres use their Lemma~3.3.
Since Lemma~\ref{lem:hitting} is a precise analogue in continuous time, the proof of Theorem~3.2 in~\cite{PP} adapts to continuous time without trouble.
In fact, we prove a generalization of the lower bound \eqref{eq:subdiffusive-lower} below.
We direct the reader to~\cite{PP} for the proof of the upper bound \eqref{eq:subdiffusive-upper}.

Clearly, a logarithmic drift satisfies \eqref{eq:subdiffusive-criterion}, so Theorem~\ref{thm:subdiffusive} constrains $Y_s^x$.
We will also consider $Y$ backwards in time.
We therefore define drifts
\begin{equation*}
  f_1(s) \coloneqq \abs{D} \log \frac{t + 1}{(t - s)_+ + 1} \And f_2(s) \coloneqq \abs{D} \log \frac{1}{s + 1}
\end{equation*}
and hitting times
\begin{equation*}
  S_x^{i \pm} \coloneqq \inf\{s \geq 0 \mid X_s + x \pm f_i(s) < 0\}
\end{equation*}
for $i \in \{1, 2\}$ and $x \geq 0$.
We note that $S_x^{i\pm}$ also depends on $t$ through $f_i$.
Furthermore, $S_x^{i-} \leq T_x \leq S_x^{i+}$ because $f_i \geq 0$.
We use Theorem~\ref{thm:subdiffusive} to establish a version of Lemma~\ref{lem:hitting} for $S$.
\begin{lemma}
  \label{lem:drift-hitting}
  There exist $\ubar x, C_1', C_2', c_3' > 0$ depending on $K$, $\nu$, and $D$ such that for each $i \in \{1, 2\}$ and all $s > 0$\textup{:}
  \begin{enumerate}[label = \textup{(\roman*)}, leftmargin = 5em, labelsep = 1em, itemsep= 1ex, topsep = 1ex]
  \item
    \label{item:drift-hitting-upper}
    $\P[S_x^{i\pm} > s] \leq C_1' \max\{x, 1\} s^{-\frac 1 2}$ \quad for all $x \geq 0$;

  \item
    \label{item:drift-hitting-conditioned}
    $\E \big[X_s^2 \mid S_x^{i\pm} > s \big] \leq C_2' s$ \quad for all $x \geq 1$;

  \item
    \label{item:drift-hitting-lower}
    $\P[S_x^{i\pm} > s] \geq c_3' \min\big\{x s^{-\frac 1 2}, 1\big\}$ \quad for all $x \geq \ubar x$.
  \end{enumerate}
\end{lemma}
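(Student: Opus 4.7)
The starting observation is that one of the functions $\pm f_i$ is pointwise nonnegative and the other is pointwise nonpositive, so for each $i \in \{1, 2\}$ exactly one sign choice gives $S_x^{i\pm} \leq T_x$ and the other $S_x^{i\pm} \geq T_x$. Monotonicity of hitting times in the barrier therefore supplies the ``easy'' half of \ref{item:drift-hitting-upper} and \ref{item:drift-hitting-lower} directly from the corresponding parts of Lemma~\ref{lem:hitting}. What remains is the ``hard'' upper bound (when $S_x^{i\pm} \geq T_x$), the ``hard'' lower bound (when $S_x^{i\pm} \leq T_x$), and the conditional second-moment estimate \ref{item:drift-hitting-conditioned}.

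For the hard upper bound I would invoke \eqref{eq:subdiffusive-upper} of Theorem~\ref{thm:subdiffusive}, whose hypothesis \eqref{eq:subdiffusive-criterion} is satisfied uniformly since $|f_i(s)| \leq |D|\log(s + 1)$, respectively $|D|\log(t + 1)$. Theorem~\ref{thm:subdiffusive} is stated for a walk started from $0$, so I would treat the regime $x \geq C\sqrt{s}$ trivially by Kolmogorov's martingale maximal inequality, exactly as in the proof of Lemma~\ref{lem:hitting}, and handle $x \leq C\sqrt{s}$ by splitting $[0, s]$ in half: absorb the starting point on the first segment via Lemma~\ref{lem:hitting}\ref{item:hitting-upper}, then apply the Markov property at $s/2$ and invoke \eqref{eq:subdiffusive-upper} on the second half. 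This yields the $\max\{x, 1\}/\sqrt{s}$ scaling with constants uniform in $t$.

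The hard lower bound is the main technical obstacle and, as the excerpt signals, will be proved by directly rerunning the Pemantle--Peres construction in our setting. The strategy is the standard two-scale one behind \eqref{eq:subdiffusive-lower}: wait a buffer time $s_0$ during which the walk from $x$ has uniformly positive probability of rising well above the drifted barrier $\mp f_i$, then invoke the Markov property together with a constant-starting-point ballot bound on $[s_0, s]$. The threshold $\ubar x$ in the statement is exactly the initial clearance between $x$ and the drifted barrier that the buffer step requires; producing constants $c_3'$ and $\ubar x$ uniform in $t$ and in $x \geq \ubar x$, in continuous time and with a time-dependent logarithmic drift, is the delicate point.

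Finally, \ref{item:drift-hitting-conditioned} is a consequence of the other two parts by the same Wald/optional-stopping argument that closes the proof of Lemma~\ref{lem:hitting}. Applying optional stopping to the $L^2$ martingale $X_s^2 - \nu\,\E \m X^2\, s$ at $S_x^{i\pm}\wedge s$ and integrating the tail bound \ref{item:drift-hitting-upper} gives
\begin{equation*}
  \E\big[X_s^2\,\tbf{1}_{S_x^{i\pm} > s}\big] \leq \nu\,\E \m X^2\,\E[S_x^{i\pm}\wedge s] \leq C \max\{x, 1\}\sqrt{s};
\end{equation*}
dividing by the lower bound \ref{item:drift-hitting-lower} (and treating $x \geq C\sqrt{s}$ directly via the martingale maximal inequality, as in Lemma~\ref{lem:hitting}) completes the estimate.
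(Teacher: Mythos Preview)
Your outline has two gaps. For \ref{item:drift-hitting-upper} in the hard direction (where $S_x^{i\pm} \geq T_x$), splitting $[0,s]$ at $s/2$ does not extract the factor of $x$. On the first segment, Lemma~\ref{lem:hitting}\ref{item:hitting-upper} bounds $\P[T_x > s/2]$, but $\{S_x^{i\pm} > s/2\}$ is the \emph{larger} event, so that bound does not apply; replacing the curved barrier by its value at time $s/2$ lets Lemma~\ref{lem:hitting}\ref{item:hitting-upper} in, but inserts an unwanted additive $\log s$. And on the second segment, \eqref{eq:subdiffusive-upper} cannot be applied uniformly in the (random) position $X_{s/2}$, since the constant $C_4$ depends on the barrier. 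The paper uses a different device: it \emph{prepends} a walk from level $1$ to above level $x$ in time $C(x^2+1)$, which by the CLT, FKG, and Lemma~\ref{lem:hitting}\ref{item:hitting-lower} occurs with probability at least $c/\max\{x,1\}$. The concatenated event is then contained in $\{\tilde T_1 > C(x^2+1) + s\}$ for a barrier $\tilde f$ that is \emph{independent of $x$}; applying \eqref{eq:subdiffusive-upper} to $\tilde f + 1$ and dividing by $c/\max\{x,1\}$ yields the desired $\max\{x,1\}/\sqrt s$.

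For \ref{item:drift-hitting-conditioned}, the Wald argument you propose ends by dividing by \ref{item:drift-hitting-lower}, but that lower bound is only established for $x \geq \ubar x$, not $x \geq 1$; in the direction $S_x^{i\pm} \leq T_x$ there is no easy substitute from Lemma~\ref{lem:hitting}. This is exactly why the paper does \emph{not} repeat the Wald argument from Lemma~\ref{lem:hitting} here. Instead it conditions on the time $\rho = \arginf_{[0,s]} X$ and the value $X_{\rho-}$ of the running minimum: on $\{S_x^{i\pm} > s\}$ one has $|X_{\rho-}| \leq x + |f_i(\rho)| \lesssim \sqrt s$, and the Markov property at $\rho$ reduces the conditional second moment to Lemma~\ref{lem:hitting}\ref{item:hitting-conditioned}, with no appeal to \ref{item:drift-hitting-lower}. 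Your sketch of \ref{item:drift-hitting-lower} is in the right spirit; the paper carries out precisely the Pemantle--Peres dyadic construction you allude to, with $\ubar x$ arising as $\ubar C\, 2^{m_0/2}$ so that the invariance principle handles the initial buffer.
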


\begin{proof}
  \ref{item:drift-hitting-upper}
  It suffices to consider $x \leq \sqrt{s}$.
  Since $\P[S_x^{i-} > s] \leq \P[T_x > s]$, Lemma~\ref{lem:hitting}\ref{item:hitting-upper} allows us to reduce to the cases $S^{i+}$ for $i \in \{1, 2\}$.
  By the central limit theorem, there exists $C \geq 1$ such that
  \begin{equation*}
    \P\big[X_{C (x^2 + 1)} > x\big] \geq \frac 1 3.
  \end{equation*}
  Thus by Lemma~\ref{lem:hitting}\ref{item:hitting-lower} and the FKG inequality (see, for instance,~\cite[\S 2.2]{Grimmett}),
  \begin{equation}
    \label{eq:reaching-diffusive-scale}
    \P\big[X_{C (x^2 + 1)} > x, \, T_1 > C (x^2 + 1)\big] \geq \frac 1 3 \P[T_1 > C (x^2 + 1)] \geq c \min\{x^{-1}, 1\}
  \end{equation}
  for some $c > 0$ which may change from line to line.
  We now form a new drift
  \begin{equation*}
    \ti f(s) \coloneqq
    \begin{cases}
      0 & \textrm{for } s < C (x^2 + 1),\\
      f_i(s - C (x^2 + 1)) & \textrm{for } s \geq C (x^2 + 1).
    \end{cases}
  \end{equation*}
  Let $\ti T_1 \coloneqq \inf\{s \geq 0 \mid X_s + 1 + \ti f(s) < 0\}$ and define the events
  \begin{align*}
    A &\coloneqq \big\{X_{C (x^2 + 1)} > x, \, T_1 > C (x^2 + 1)\big\},\\
    B &\coloneqq \big\{X_{C (x^2 + 1) + r} - X_{C (x^2 + 1)} \geq -x - f_i(r) \; \textrm{ for all } 0 \leq r \leq s\big\}.
  \end{align*}
  Now, disjoint increments of $X$ are independent, and increments of equal length are equidistributed.
  It follows that
  \begin{equation}
    \label{eq:concatenation}
    \P[A] \P\big[S_x^{i+} > s\big] = \P[A, B] \leq \P\big[\ti{T}_1 > C (x^2 + 1) + s\big].
  \end{equation}
  Theorem~\ref{thm:subdiffusive} applies to $\ti f + 1$, so \eqref{eq:subdiffusive-upper} implies
  \begin{equation*}
    \P\big[\ti{T}_1 > C (x^2 + 1) + s\big] \leq \frac{C_4}{\sqrt{C (x^2 + 1) + s}} \leq C_4 s^{-\frac 1 2}.
  \end{equation*}
  Combining this with \eqref{eq:reaching-diffusive-scale} and \eqref{eq:concatenation}, we obtain
  \begin{equation*}
    \P[S_x^{i+} > s] \leq C_1' \max\{x, 1\} s^{-\frac 1 2}.
  \end{equation*}

  \ref{item:drift-hitting-conditioned}
  The continuous-time random walk $X$ obeys the invariance principle; see, for instance,~\cite[Theorem~19.25]{Kallenberg}.
  Therefore, since $f_i(s) = \smallO\big(\sqrt{s}\big)$,
  \begin{equation}
    \label{eq:invariance}
    \P\big[S_x^{i\pm} > s\big] \geq \frac 1 2
  \end{equation}
  provided $x \geq \ubar{C} \sqrt{s}$ for $\ubar{C} > 0$ sufficiently large depending on $K$, $\nu,$ and $D$.
  By \eqref{eq:moment}, we may therefore assume that $1 \leq x \leq \ubar{C} \sqrt{s}$.

  We condition on the time and location of the minimal value of $X$.
  Define the random variable
  \begin{equation*}
    \rho \coloneqq \arginf_{r \in [0, \, s]} X_r.
  \end{equation*}
  Then
  \begin{equation}
    \label{eq:min-position}
    \E\big[X_s^2 \mid S_x^{i \pm} > s\big] \leq \sup_{r, y} \E\big[X_s^2 \mid  S_x^{i \pm} > s, \, \rho = r, \, X_{r-} = y\big],
  \end{equation}
  where the supremum ranges over $r \in [0, s]$ and $y \in [-x \mp f_i(r), 0]$.
  Since $(X_s)_{s \geq 0}$ is c\`adl\`ag and Markov,
  \begin{equation*}
    \E\big[X_s^2 \mid  S_x^{i \pm} > s, \, \rho = r, \, X_{r-} = y\big] = \E\big[(y + X_{s - r})^2 \mid X_u \geq 0 \textrm{ for } u \in [0, s - r]\big].
  \end{equation*}
  But $y^2 \leq C' s$ for some $C' > 0$ fixed, so Young's inequality and Lemma~\ref{lem:hitting}\ref{item:hitting-conditioned} imply
  \begin{equation*}
    \E\big[(y + X_{s - r})^2 \mid X_u \geq 0 \textrm{ for } u \in [0, s - r]\big] \leq C'' s.
  \end{equation*}
  By \eqref{eq:min-position}, we are done.

  \ref{item:drift-hitting-lower}
  The lower bound for $S^{i+}$ follows from Lemma~\ref{lem:hitting}\ref{item:hitting-lower}, so we consider $S^{i-}$.
  By \eqref{eq:invariance}, it suffices to consider $x \leq \ubar{C} \sqrt{s}$.
  We extend the proof of Theorem~3.2(i) in~\cite{PP} to $\ubar x \leq x \leq C \sqrt{s}$.

  For $m \in \N$, we define the event
  \begin{equation*}
    V_m \coloneqq \big\{X_r + x \geq f_i(r)\, \textrm{ for all } r \in (2^{m - 1}, 2^m]\big\}.
  \end{equation*}
  We claim that there exists $\ti C > 0$ such that
  \begin{equation}
    \label{eq:dyadic-success}
    \P\big[V_m^c \mid T_x > 4N\big] \leq \ti C f_i(2^m) 2^{-m/2}
  \end{equation}
  for any $N \geq 2^{m - 1}$.
  To see this, we condition on the value of the first violation:
  \begin{equation*}
    r_* \coloneqq \inf \big\{r \in (2^{m - 1}, 2^m] \mid X_r + x < f_i(r)\big\}.
  \end{equation*}
  Since $f_i$ is increasing,
  \begin{align*}
    \P\big[V_m^c, \, T_x > 4N\big] &\leq \sup_{r \in (2^{m - 1}, 2^m]} \P\big[V_m^c, \, T_x > 4N \mid r_* = r\big]\\
                                  &\leq \P\big[T_x > 2^{m - 1}\big] \sup_r \P\big[X_u - X_r \geq -f_i(2^m) \, \textrm{ for } u \in [r, 4N]\big]\\
                                  &\leq \P\big[T_x > 2^{m - 1}\big] \P\big[T_{f_i(2^m)} > 2N\big].
  \end{align*}
  By Lemma~\ref{lem:hitting}\ref{item:hitting-upper},
  \begin{equation*}
    \P\big[V_m^c, \, T_x > 4N\big] \leq C_1^2 x f_i(2^m) \big(2^m N\big)^{-\frac 1 2}.
  \end{equation*}
  Dividing by $\P\big[T_x > 4N\big]$ and appealing to Lemma~\ref{lem:hitting}\ref{item:hitting-lower}, we obtain \eqref{eq:dyadic-success} with ${\ti C = 2C_1^2c_3^{-1}}$.
  
  Since $f_i$ satisfies \eqref{eq:subdiffusive-criterion} and is increasing, there exists $m_0$ such that
  \begin{equation*}
    \sum_{m = m_0}^\infty f(2^m) 2^{-m/2} \leq \frac{1}{2 \ti C}.
  \end{equation*}
  By \eqref{eq:dyadic-success},
  \begin{equation*}
    \sum_{m = m_0}^M \P\big[V_m^c \;\big|\; T_x > 2^{M + 1}\big] \leq \frac 1 2
  \end{equation*}
  for any $M \geq m_0$.
  Therefore
  \begin{equation*}
    \P\left[\bigcap_{m = m_0}^M V_m \; \bigg| \; T_x > 2^{M + 1} \right] \geq \frac 1 2.
  \end{equation*}
  Multiplying by $\P\big[T_x > 2^{M + 1}\big]$ and using Lemma~\ref{lem:hitting}\ref{item:hitting-lower}, we find
  \begin{equation*}
    \P\left[\bigcap_{m = m_0}^M V_m\right] \geq c x 2^{-\frac{M}{2}}
  \end{equation*}
  for some $c > 0$ which may change from line to line.
  Now, the FKG inequality implies
  \begin{equation*}
    \P\big[S_x^{i-} > 2^M\big] = \P\left[\bigcap_{m = m_0}^M V_m, \; S_x^{i-} > 2^{m_0}\right] \geq c x 2^{-\frac{M}{2}} \P\big[S_x^{i-} > 2^{m_0}\big].
  \end{equation*}
  Finally, we can choose $\ubar{x} \coloneqq \ubar{C} \, 2^{\frac{m_0}{2}}$.
  Then \eqref{eq:invariance} implies ${\P\big[S_x^{i-} > 2^{m_0}\big] \geq \frac 1 2}$.
  Taking $M = \ceil{\log_2 s}$, the proof is complete.
\end{proof}

\begin{corollary}
  \label{cor:conditioned-diffusive}
  There exist $ C_2'', \beta > 0$ and $s_1 \geq 1$ depending on $K$, $\nu,$ and $D$ such that for all ${i \in \{1, 2\}}$, $s \geq s_1,$ and $1 \leq x \leq \sqrt{s}$,
  \begin{equation*}
    \E \Big[(Y_s^x)^2 \mid S_x^{i\pm} > s, \, Y_s^x \geq \beta \sqrt{s}\Big] \leq C_2'' s.
  \end{equation*}
\end{corollary}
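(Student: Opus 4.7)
The plan is to reduce the estimate to Lemma~\ref{lem:drift-hitting}\ref{item:drift-hitting-conditioned} by showing that the extra conditioning on $B \coloneqq \{Y_s^x \geq \beta \sqrt{s}\}$ costs only a constant factor. Writing $A \coloneqq \{S_x^{i\pm} > s\}$, we have
\[
\E\bigl[(Y_s^x)^2 \mid A \cap B\bigr] \leq \frac{\E\bigl[(Y_s^x)^2 \tbf{1}_A\bigr]}{\P[A \cap B]},
\]
so it suffices to bound the numerator by $\m O(s)\,\P[A]$ and the denominator below by $c\,\P[A]$.

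For the numerator, the key observation is that for $0 \leq s \leq t$ the deterministic drift satisfies
\[
\Bigl|D \log \tfrac{t+1}{t-s+1}\Bigr| \leq \abs{D} \log(s+1) \leq \sqrt{s}
\]
once $s \geq s_1(D)$, uniformly in $t \geq s$. Combined with $x \leq \sqrt{s}$, this yields
\[
(Y_s^x)^2 \leq 2 X_s^2 + 2(1 + \abs{D})^2 s,
\]
so Lemma~\ref{lem:drift-hitting}\ref{item:drift-hitting-conditioned} delivers $\E[(Y_s^x)^2 \tbf{1}_A] \leq C\, s\, \P[A]$.

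The denominator bound $\P[A \cap B] \geq c\,\P[A]$ is the crux; it is a continuous-time ``entrance law'' estimate of Brownian-meander type. Since $Y_s^x \geq X_s - \abs{D}\sqrt{s}$ for $s \geq s_1$, it is enough to show $\P[A,\, X_s \geq M\sqrt{s}] \geq c\,\P[A]$ for a fixed $M \coloneqq \beta + \abs{D}$. I would split $[0,s]$ at $s/2$ and decompose $A = A_1 \cap A_2$, where $A_j$ is the restriction of the barrier event to the appropriate half. The upper tail of $X_{s/2}$ given $A_1$ is controlled by Lemma~\ref{lem:drift-hitting}\ref{item:drift-hitting-conditioned} and Markov's inequality; the matching lower bound
\[
\P\bigl[A_1,\, X_{s/2} \in (\eta\sqrt{s},\, 2\eta\sqrt{s})\bigr] \geq c_\eta\,\P[A_1]
\]
is obtained by concatenating Lemma~\ref{lem:drift-hitting}\ref{item:drift-hitting-lower} (which guarantees survival up to time $s/2 - \m O(1)$) with Stone's local limit theorem (Proposition~\ref{prop:LLT}) applied over the final $\m O(1)$-length increment to place the walk in the target window. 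Conditioning on $X_{s/2} = y \in (\eta\sqrt{s},\, 2\eta\sqrt{s})$ and invoking the Markov property, the remaining event $A_2 \cap \{X_s \geq M\sqrt{s}\}$ reduces to a fresh walk on $[s/2, s]$ that stays above $-(\eta + \m O(1))\sqrt{s}$ and ends above $(M - \eta)\sqrt{s}$; both hold with probability bounded away from zero by Kolmogorov's maximal inequality and the central limit theorem, provided $\eta$ is chosen comfortably larger than $M$.

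The principal obstacle is establishing the entrance-law inequality without appealing to a Brownian scaling limit. The upper-tail half is routine, but the lower bound requires carefully coordinating barrier survival with a diffusive-scale endpoint placement. I would follow the ``hands-on'' concatenation philosophy of Addario-Berry and Reed~\cite{ABR2}: build a favorable barrier-respecting trajectory up to time $s/2 - \m O(1)$ via Lemma~\ref{lem:drift-hitting}\ref{item:drift-hitting-lower}, then append a single Stone-style last-step estimate to land in $(\eta\sqrt{s},\, 2\eta\sqrt{s})$. This is the only step that requires genuinely new work beyond Lemma~\ref{lem:drift-hitting}.
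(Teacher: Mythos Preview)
Your overall reduction is exactly right, and the numerator bound via Lemma~\ref{lem:drift-hitting}\ref{item:drift-hitting-conditioned} matches the paper. The divergence is in the denominator bound $\P[B \mid A] \geq c$, which you flag as ``the crux'' and propose to establish by a two-stage concatenation. The paper instead dispatches it in one line with the FKG inequality: both $A = \{S_x^{i\pm} > s\}$ and $B = \{Y_s^x \geq \beta\sqrt{s}\}$ are increasing events in the jump variables of the walk, so $\P[B \mid A] \geq \P[B]$, and the central limit theorem together with $f_i(s) = \smallO(\sqrt{s})$ gives $\P[B] \geq \tfrac{1}{3}$ once $\beta$ is chosen small enough and $s \geq s_1$. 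That replaces your entire entrance-law construction; the final bound is then simply $3\,\E[X_s^2 \mid A] \leq 3C_2' s$.

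Your concatenation sketch also has a genuine gap as written. The step ``concatenate Lemma~\ref{lem:drift-hitting}\ref{item:drift-hitting-lower} with Stone's local limit theorem over a final $\m O(1)$-length increment to place the walk in $(\eta\sqrt{s}, 2\eta\sqrt{s})$'' does not deliver the claimed lower bound: an $\m O(1)$-length increment moves the walk by $\m O(1)$, so it can only place the endpoint in the diffusive-scale window if the walk is already there at time $s/2 - \m O(1)$, and Lemma~\ref{lem:drift-hitting}\ref{item:drift-hitting-lower} gives you survival, not endpoint location. Getting $\P[X_{s/2} \geq \eta\sqrt{s} \mid A_1] \geq c$ without FKG requires substantially more work---for instance a minimum-conditioning argument of the kind used in the paper's proof of Lemma~\ref{lem:drift-hitting}\ref{item:drift-hitting-conditioned}, or an honest meander limit. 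FKG is precisely the observation that makes this step free, and it is the one idea your proposal is missing.
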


\begin{proof}
  Since $x \leq \sqrt{s}$ and $f_i(s) = \smallO\big(\sqrt{s}\big)$, Young's inequality shows that it suffices to bound
  \begin{equation*}
    \E \Big[X_s^2 \mid S_x^{i\pm} > s, \, Y_s^x \geq \beta \sqrt{s}\Big].
  \end{equation*}
  By the central limit theorem, there exist $\beta > 0$ and $s_1 \geq 1$ such that
  \begin{equation*}
    \P\Big[X_s \geq 2\beta \sqrt{s}\Big] \geq \frac 1 3 \ForAll s \geq s_1.
  \end{equation*}
  Since $f_i(s) = \smallO\big(\sqrt{s}\big)$, we can increase $s_1 \geq 1$ to ensure that $f_i(s) \leq \beta \sqrt{s}$ for all $s \geq s_1$.
  Then
  \begin{equation*}
    \P\Big[Y_s^x \geq \beta \sqrt{s}\Big] \geq \P\Big[X_s^x \geq 2\beta\sqrt{s}\Big] \geq \frac 1 3.
  \end{equation*}
  By the FKG inequality,
  \begin{equation*}
    \P\big[Y_s^x \geq \beta \sqrt{s} \mid S_x^{i\pm} > s\big] \geq \frac 1 3.
  \end{equation*}
  Therefore
  \begin{align*}
    \E \Big[X_s^2 \mid S_x^{i\pm} > s, \, Y_s^x \geq \beta \sqrt{s}\Big] &= \frac{\E[X_s^2 \tbf{1}_{\{Y_s^x \geq \beta \sqrt{s}\}} \mid S_x^{i\pm} > s]}{\P[Y_s^x \geq \beta \sqrt{s} \mid S_x^{i \pm} > s]}\\
                                                                 &\leq 3 \E\big[X_s^2 \mid S_x^{i\pm} > s\big] \leq 3 C_2' s
  \end{align*}
  by Lemma~\ref{lem:drift-hitting}\ref{item:drift-hitting-conditioned}.
\end{proof}

\subsection{Proof of Lemma \ref{lem:key}.}

We choose $L = \ubar x$.
Let $I \coloneqq (\ubar x, 2 \ubar x)$.
We define the drift
\begin{equation*}
  f(s) \coloneqq D \log\frac{t + 1}{t - s + 1}
\end{equation*}
as well as the random walk and drifts viewed in reverse:
\begin{equation*}
  \bar{X}_s \coloneqq X_{t - s} - X_t \And \bar f(s) \coloneqq f(t - s) - f(t) = D \log \frac{1}{s + 1}.
\end{equation*}
Although all the lemmas above were stated for $X_s$, they of course apply to $\bar{X}_s$ as well.
Note that $f, \bar f \in \big\{\pm f_i\big\}_{i \in \{1, 2\}}$, with their precise identities depending on the sign of $D$.
Define
\begin{equation*}
  Y_s^x \coloneqq X_s + x + f(s) \And \bar{Y}_s^y \coloneqq \bar{X}_s + y + \bar{f}(s).
\end{equation*}
Finally, define the stopping times
\begin{equation*}
  S_x \coloneqq \inf\big\{s \geq 0 \mid Y_s^x < 0\big\} \And \bar{S}_y \coloneqq \inf\big\{s \geq 0 \mid \bar{Y}_s^y < 0\big\},
\end{equation*}
so that $\big\{S_x, \bar{S}_x\big\} \subset \big\{S_x^{i\pm}\big\}_{i \in \{1, 2\}}$.

With this notation set, we begin with the upper bound.
We wish to control $\P[Y_t^x \in I, S_x > t]$ as a function of $x$ and $t$.
We condition on the final value $y \coloneqq Y_t^x$.
If $Y_t^x \in I$ and $S_x > t$, the following three events must occur:
\begin{equation*}
  S_x \geq \frac{t}{3}, \quad \bar{S}_y \geq \frac{t}{3}, \And y \in I.
\end{equation*}
Using the independence of disjoint increments of $X$, we have
\begin{equation*}
  \P\big[Y_t^x \in I, S_x > t\big] \leq \P\Big[S_x \geq \frac{t}{3}\Big] \, \sup_{y \in I} \P\Big[\bar{S}_y \geq \frac{t}{3}\Big] \, \sup_{z \in \R} \P\big[Y_{2t/3}^x - Y_{t/3}^x + z \in I\big].
\end{equation*}
Here $y$ represents the final position, and $z$ the sum of the increments of $Y^x$ on $[0,t/3]$ and $[2t/3, t]$.
We can handle the first two terms with Lemma~\ref{lem:drift-hitting}\ref{item:drift-hitting-upper}.
For the last, we use a trivial consequence of Proposition~\ref{prop:LLT}: the probability that $X_s$ lands in \emph{any} interval of bounded width is at most $Cs^{-\frac 1 2}$.
Thus
\begin{equation*}
  \P\big[Y_t^x \in I, S_x > t\big] \leq \frac{C \max\{x, 1\}}{\sqrt{t}} \cdot \frac{C}{\sqrt{t}} \cdot \frac{C}{\sqrt{t}} \leq C (x + 1) t^{-\frac 3 2},
\end{equation*}
where we have allowed $C$ to change from expression to expression.

For the lower bound, we employ a similar structure.
We may now assume that $\ubar x \leq x \leq \sqrt{t}$.
Also, it suffices to prove a lower bound for all $t \geq \ubar t$, for some fixed $\ubar{t}$ depending on $K$, $\nu,$ and $D$.
Indeed, by increasing $C_*$, we can ensure that the left side of \eqref{eq:key} is nonpositive when $t < \ubar t$.

We begin with $\ubar{t} = 4\max\{s_0, s_1\}$, though we will increase it steadily over the course of the proof.
Recalling the constants $C_S$ and $\beta$ from Proposition~\ref{prop:LLT} and Corollary~\ref{cor:conditioned-diffusive}, respectively, we define $\al \in [1/4, 1/2)$ by
\begin{equation*}
  1 - 2\al = \min\left\{\frac{\beta^2}{2^9 C_S^2 \,  \nu \E \m X^2}, \, \frac 1 2\right\}.
\end{equation*}
We again condition on the final position $y = Y_t^x$.
If the following events occur, they will ensure that $Y_t^x \in I$ and $S_x > t$:
\begin{itemize}[itemsep = 1ex]
\item $E_1$: $S_x > \al t$ \; and \; $\beta \sqrt{\al t} \leq Y_{\al t}^x \leq \sqrt{C_2'' t}$;

\item $E_2$: $\bar{S}_y > \al t$ \; and \; $\beta \sqrt{\al t} \leq \bar{Y}_{\al t}^0 \leq \sqrt{C_2'' t}$;

\item $E_3$: ${\displaystyle \inf_{\al t \leq s \leq (1 - \al)t} (Y_s^x - Y_{\al t}^x) \geq - Y_{\al t}^x}$;

\item $E_4$: $y \in I$.
\end{itemize}
By the independence of disjoint increments of $X$, we have
\begin{equation*}
  \P\big[Y_t^x \in I,\, S_x > t\big] \geq \P[E_1] \, \inf_{y \in I} \P[E_2] \, \inf_{y \in I} \P[E_3, E_4 \mid E_1, E_2].
\end{equation*}

Lemma~\ref{lem:drift-hitting}\ref{item:drift-hitting-lower} implies $\P[S_x > \al t] \geq \frac{c_3'x}{\sqrt{\al t}}$.
By the choice of $\beta$ in the proof of Corollary~\ref{cor:conditioned-diffusive}, $t \geq \ubar{t} \geq 4s_1$, and FKG, we have
\begin{equation}
  \label{eq:alive-diffusive}
  \P\Big[S_x > \al t,\, Y_{\al t}^x \geq \beta \sqrt{\al t}\Big] \geq \frac{c_3' x}{3\sqrt{\al t}}.
\end{equation}
Also, Chebyshev's inequality and Corollary~\ref{cor:conditioned-diffusive} imply
\begin{equation*}
  \begin{split}
    \P\Big[Y_{\al t}^x > \sqrt{C_2'' t} &\;\; \big | \;\; S_x > \al t, \, Y_{\al t}^x > \beta \sqrt{\al t}\Big]\\
    &\leq \frac{\E\left[(Y_{\al t}^x)^2 \mid S_x > \al t, \, Y_{\al t}^x > \beta \sqrt{\al t}\right]}{C_2'' t} \leq \frac{C_2'' \al t}{C_2'' t} < \frac 1 2.
  \end{split}
\end{equation*}
By \eqref{eq:alive-diffusive}, we obtain
\begin{equation*}
  \P[E_1] \geq c x t^{-\frac 1 2}
\end{equation*}
for some $c$ which may change from line to line.
By identical reasoning,
\begin{equation*}
  \inf_{y \in I} \P[E_2] \geq c t^{-\frac 1 2}.
\end{equation*}
It will thus suffice to show that $\P[E_3, E_4 \mid E_1, E_2] \geq c t^{-\frac 1 2}$.

Let $m \coloneqq (1 - 2\al) t$ denote the length of the middle period.
For $s \in [0, m]$, write
\begin{equation*}
  L_s \coloneqq Y_{\al t + s}^x - Y_{\al t}^x \And R_s \coloneqq Y_{(1 - \al) t - s}^x - Y_{(1 - \al) t}^x.
\end{equation*}
Then $y \coloneqq Y_t^x \in I$ is equivalent to
\begin{equation*}
  L_m \in \bar{Y}_{\al t}^0 - Y_{\al t}^x + I.
\end{equation*}
By the independence of disjoint increments of $X$,
\begin{equation*}
  \P[E_3, E_4 \mid E_1, E_2] \geq \inf_{p, q \in \left[\beta \sqrt{\al t}, \, \sqrt{C_3'' t}\right]} \P\left[L_m \in q - p + I, \, \inf_{s \in [0, \, m]} L_s \geq -p\right].
\end{equation*}
Suppose $p \geq q$ and let $A_{p,q} \coloneqq \{L_m \in q - p + I\}$ and $B_p \coloneqq \big\{\inf_{s \in [0, \, m]} L_s \geq -p\big\}$.
Then
\begin{equation*}
  \P[A_{p,q}, B_p] = \P[A_{p,q}] - \P\big[A_{p, q},  B_p^c\big].
\end{equation*}
We control the first term with Stone's local limit theorem, i.e. Proposition~\ref{prop:LLT}:
\begin{equation}
  \label{eq:Stone-lower-init}
  \P[A_{p,q}] \geq \frac{1}{C_S \sqrt{m}} \exp\left\{-\frac{[q + \ubar{x} - p - f((1 - \al)t) + f(\al t)]^2}{2 \nu \E \m X^2 m}\right\} + \smallO\big(m^{-1/2}\big).
\end{equation}
Recall that $f(t) = \smallO\big(\sqrt{t}\big)$.
We can thus increase $\ubar{t}$ and assume that
\begin{equation}
  \label{eq:F-small}
  F \coloneqq \abs{f(t)} + \ubar{x} \leq \frac{\beta \sqrt{\al t}}{2} \leq \frac{1}{2} \min\{p, q\}.
\end{equation}
Now $F \lesssim \sqrt{t}$ while $p, q \asymp \sqrt{t}$, and $m \asymp t$.
Hence the term in the exponential in \eqref{eq:Stone-lower-init} is bounded.
It follows that we can absorb the $\smallO\big(m^{-1/2}\big)$ error into the main term, provided $\ubar{t}$ is sufficiently large.
Then
\begin{equation}
  \label{eq:Stone-lower}
  \P[A_{p,q}] \geq \frac{1}{2 C_S \sqrt{m}} \exp\left[-\frac{(p - q + F)^2}{2 \nu \E \m X^2 m}\right].
\end{equation}

We will argue that $A_{p, q} \cap B_p^c$ is significantly more unlikely, for then $L$ is forced to make a large excursion.
We can write $B_p^c = U_p \cup V_p \cup W_q$ for
\begin{equation*}
  U_p \coloneqq \left\{\inf_{s \in [0, \, t^{2/3}]} L_s < -p\right\}, \quad V_p \coloneqq \left\{\inf_{s \in [t^{2/3}, \, m/2]} L_s < -p\right\},
\end{equation*}
and
\begin{equation*}
   W_q \coloneqq \left\{\inf_{s \in [m/2, \, m]} R_s < -q\right\}.
\end{equation*}

Now, $U_p$ implies that an increment of $X$ of length at most $t^{2/3}$ drops by at least $p - F$.
By Kolmogorov's maximal inequality and $\al \geq 2^{-2}$,
\begin{equation}
  \label{eq:U-KMM}
  \P[U_p] \leq \frac{\E X_{t^{2/3}}^2}{(p - F)^2} \leq \frac{2^2 \nu \E \m X^2 t^{2/3}}{\beta^2 \al t} \leq \frac{2^4 \nu \E \m X^2}{\beta^2} t^{-\frac 1 3}.
\end{equation}
For $V_p$, we use Proposition~\ref{prop:LLT}:
\begin{equation*}
  \P[V_p] \leq \sup_{r \in [t^{2/3}, \, m/2]} \left[C_S r^{-\frac 1 2} + \smallO\big(r^{-1/2}\big)\right].
\end{equation*}
Now $r \geq t^{2/3}$, so by increasing $\ubar{t}$, we can absorb the error into the main term:
\begin{equation}
  \label{eq:V-Stone}
  \P[V_p] \leq 2 C_S t^{-\frac 1 3}.
\end{equation}
Combining \eqref{eq:U-KMM} and \eqref{eq:V-Stone}, we have
\begin{equation}
  \label{eq:early-drop}
  \P[U_p \cup V_p] \leq C_{1} t^{-\frac 1 3}
\end{equation}
for some constant $C_{1}$ depending only on $K,$ $\nu$, and $D$.
On the other hand, if $A_{p, q}$ still occurs, our walk must climb back up at least to position $q$.
Using the independence of disjoint increments of $X$ and Proposition~\ref{prop:LLT},
\begin{equation*}
  \P[A_{p, q} \mid U_p \cup V_p] \leq \sup_{r \in [0, \, m/2]} \frac{C_S}{\sqrt{m - r}} \exp\left[-\frac{(q - F)^2}{2\nu \E \m X^2 (m - r)}\right] + \smallO\big((m-r)^{-1/2}\big)
\end{equation*}
We wish to compare this to \eqref{eq:Stone-lower}, but the exponent involves $q$ rather than $p - q$.
On the other hand, $p^2, q^2,$ and $m$ are all of the same order.
We have assumed $q \leq p$ and  \eqref{eq:F-small} implies $2F \leq p$.
So
\begin{equation*}
  \frac{(p + F)^2 - (q - F)^2}{2\nu \E \m X^2 m} \leq  \frac{(p + q)(p - q + 2F)}{2\nu \E \m X^2 m} \leq \frac{2 C''}{\nu \E \m X^2 (1 - 2 \al)} \eqqcolon \log C_{2}.
\end{equation*}
It follows that
\begin{equation*}
  \P[A_{p, q} \mid U_p \cup V_p] \leq \frac{2C_S C_{2}}{\sqrt{m}} \exp\left[-\frac{(p - F)^2}{2\nu \E \m X^2 m}\right] + \smallO\big(m^{-1/2}\big).
\end{equation*}
Again, we can absorb the error if we increase $\ubar{t}$.
Using \eqref{eq:early-drop}, it follows that
\begin{equation*}
  \P[A_{p, q} \cap (U_p \cup V_p)] \leq \frac{2^2 C_S C_{1} C_{2}}{t^{1/3} \sqrt{m}} \exp\left[-\frac{(p - q + F)^2}{2\nu \E \m X^2 m}\right].
\end{equation*}
Then if we take $\ubar{t} \geq \left(2^5 C_S^2 C_{1} C_{2}\right)^3,$ we obtain
\begin{equation}
  \label{eq:early-drop-connection}
  \P[A_{p, q} \cap (U_p \cup V_p)] \leq \frac{1}{2^3 C_S \sqrt{m}} \exp\left[-\frac{(p - q + F)^2}{2\nu \E \m X^2 m}\right].
\end{equation}

The case $A_{p, q} \cap W_q$ is simpler.
By the Kolmogorov maximal inequality and our choice of $\al,$
\begin{equation}
  \label{eq:late-drop}
  \P[W_q] \leq \frac{\E X_{m/2}^2}{(q - F)^2} \leq \frac{2 \nu \E \m X^2 m}{\beta^2 \al t} \leq \frac{2^3 \nu \E \m X^2 (1 - 2 \al)}{\beta^2} \leq \frac{1}{2^6 C_S^2}.
\end{equation}
On the other hand, the independence of disjoint increments of $X$ and Proposition~\ref{prop:LLT} imply
\begin{equation}
  \label{eq:late-connection-prelim}
  \P[A_{p, q} \mid W_q] \leq \sup_{r \in [m/2, \, m]} \frac{C_S}{\sqrt{r}} \exp\left[-\frac{(p - F)^2}{2\nu \E \m X^2 r}\right] + \smallO\big(r^{-1/2}\big).
\end{equation}
Now, using $F = \smallO\big(\sqrt{t}\big)$, we can increase $\ubar{t}$ to ensure that
\begin{equation*}
  \frac{(p - q + F)^2}{2\nu \E \m X^2 m} - \frac{(p - F)^2}{2\nu \E \m X^2 r} \leq \frac{(p + F)^2 - (p - F)^2}{2\nu \E \m X^2 m} \leq \frac{C \sqrt{t} F}{t} \leq \log 2.
\end{equation*}
Using this bound in \eqref{eq:late-connection-prelim} and absorbing the error as usual, we find
\begin{equation*}
  \P[A_{p, q} \mid W_q] \leq \frac{2^3 C_S}{\sqrt{m}} \exp\left[-\frac{(p - q + F)^2}{2\nu \E \m X^2 m}\right].
\end{equation*}
By \eqref{eq:late-drop}, this implies
\begin{equation}
  \label{eq:late-drop-connection}
  \P[A_{p, q} \cap W_q] \leq \frac{1}{2^3 C_S \sqrt{m}} \exp\left[-\frac{(p - q + F)^2}{2\nu \E \m X^2 m}\right].
\end{equation}

We can now combine \eqref{eq:early-drop-connection} and \eqref{eq:late-drop-connection}:
\begin{equation*}
  \P\big[A_{p, q}, B_p^c\big] \leq \frac{1}{2^2 C_S \sqrt{m}} \exp\left[-\frac{(p - q + F)^2}{2\nu \E \m X^2 m}\right].
\end{equation*}
By \eqref{eq:Stone-lower}, $\P\big[A_{p, q}, B_p^c\big] \leq \frac{1}{2} \P[A_{p, q}],$ so of course
\begin{equation*}
  \P[A_{p, q}, B_p] \geq \frac 1 2 \P[A_{p, q}] \geq \frac{1}{2^2 C_S \sqrt{m}} \exp\left[-\frac{(p - q + F)^2}{2\nu \E \m X^2 m}\right].
\end{equation*}
So far, we have assumed that $p \geq q$.
The case $q \geq p$ can be handled similarly, so in fact
\begin{equation*}
  \P[A_{p, q}, B_p] \geq \frac{1}{2^2 C_S \sqrt{m}} \exp\left[-\frac{(p + q + F)^2}{2\nu \E \m X^2 m}\right]
\end{equation*}
for any $p, q \in \left[\beta \sqrt{\al t}, \, \sqrt{C_3'' t}\right]$.
As before, the exponent is uniformly bounded and $m \asymp t$, so
\begin{equation*}
  \P[E_3, E_4 \mid E_1, E_2] \geq \inf_{p, q \in \left[\beta \sqrt{\al t}, \, \sqrt{C_3'' t}\right]} \P[A_{p, q}, B_p] \geq \frac{c}{\sqrt{t}}
\end{equation*}
for some $c > 0$ depending on $K,$ $\nu,$ and $D$.
The lower bound in \eqref{eq:key} follows.
\qed

\printbibliography

\end{document}